\title{From entropic transport to martingale transport, and applications to model calibration}
\author[1]{Jean-David Benamou}
\author[1,2]{Guillaume Chazareix}
\author[2]{Grégoire Loeper}
\affil[1]{INRIA Paris}
\affil[2]{BNP Paribas Global Markets}
\NewDocumentCommand{\expect}{ e{_} e{^} s o >{\SplitArgument{1}{|}}m }{%
  \operatorname{\mathbb{E}}
  \IfValueT{#1}{{\!}_{#1}}
  \IfValueT{#2}{{\!}^{#2}}
  \IfBooleanTF{#3}{
    \expectarg*{\expectvar#5}%
  }{
    \IfNoValueTF{#4}{
      \expectarg{\expectvar#5}%
    }{
      \expectarg[#4]{\expectvar#5}%
    }%
  }%
}
\NewDocumentCommand{\expectvar}{mm}{%
  #1\IfValueT{#2}{\nonscript\;\delimsize\vert\nonscript\;#2}%
}
\DeclarePairedDelimiterX{\expectarg}[1]{[}{]}{#1}
\newcommand*\diff{\mathop{}\!\mathrm{d}}
\newcommand{\as}{\text{a.s.}}
    \newcommand*{\eqdef}{\stackrel{\mathrm{def}}{=}}
\newcommand{\SP}[3]{\langle #1  ,  #2   \rangle_{#3} }
\newcommand{\RR}{{W}}
\NewDocumentCommand{\dint}{e{_} e{^}}{%
  {\displaystyle \int\IfValueT{#1}{_{#1}}\IfValueT{#2}{^{#2}}}
}
\newcommand{\bs}{{\overline{\sigma}}}
\newcommand{\E}{\mathbb{E}}
\newcommand{\F}{\mathcal{F}}
\newcommand{\PP}{\mathbb{P}}
\NewDocumentCommand{\bPP}{e{_} e{^}}{%
  {\overline{\PP\IfValueT{#1}{_{#1}}\IfValueT{#2}{^{#2}}}}
}
\newcommand{\R}{\mathbb{R}}
\newcommand{\Pc}{\mathcal{P}}
\newcommand{\Mc}{\mathcal{M}}
\newcommand{\Gc}{\mathcal{G}}
\newcommand{\Kc}{\mathcal{K}}
\newcommand{\Fc}{\mathcal{F}}
\newcommand{\Ic}{\mathcal{I}}
\newcommand{\Sc}{\mathcal{S}}
\newcommand{\Xc}{\mathcal{X}}
\newcommand{\C}{\mathcal{C}}
\renewcommand{\P}{\mathbf{P}}
\DeclareMathOperator{\KL}{KL}
\newtheorem{proposition}{Proposition}[section]
\newtheorem{definition}{Definition}[section]
\newtheorem{theorem}{Theorem}[section]
\newtheorem{remark}{Remark}[section]
\newtheorem{lemma}{Lemma}[section]
\begin{document}

\maketitle
\begin{abstract}
  We propose a discrete time formulation of the semi martingale optimal transport problem based on multi-marginal entropic transport.
  This approach offers a new way to formulate and solve numerically
  the calibration problem proposed by \cite{guo2021optimal}, using a
  multi-marginal extension of Sinkhorn algorithm as in \cite{BenamouE,CarlierGFE,BenamouMFG2}.
  In the limit when the time step goes to zero we recover, as detailed in the companion paper \cite{Mpaper},  a semi-martingale process, solution to a semi-martingale optimal transport problem, with a cost function involving the so-called specific entropy introduced in
  \cite{Gantert91}, see also \cite{follmer22} and \cite{backhoff23}.

\end{abstract}

\section{Introduction}
Applications of Semi Martingale Optimal Transport (SMOT) in finance have been
the object of several recent studies (\cite{Tan_2013}, \cite{guo2021optimal}, \cite{Guyon_2022} amongst others). This framework is particularly well adapted  to the problem of model calibration: Find a diffusion model that is compatible with observed option prices.
SMOT is the stochastic version of
Dynamic Optimal Transport (DOT), that was  introduced by \cite{BBA}, as a generalization of OT  where transport is achieved by a time dependent flow
minimizing the kinetic energy.

While the theoretical aspects of these problems are now well understood, the numerical implementation remains challenging.

In the meantime, a stochastic relaxation of static (i.e. not dynamic) optimal transport, known as {\it Entropic} Optimal Transport (EOT), has shown to be solvable very efficiently, by the so-called {\it Sinkhorn} algorithm (see \cite{PeyreB} for a review). Interestingly,  while there is equivalence between the Static OT problem and its dynamic version, the Entropic regularisation of OT can also be seen either as  a static problem, or as variant of the DOT adding a constant volatility diffusion to the governing model, this dynamic problem is known as the Schr\"odinger problem, see \cite{LeonardS}. 

From a  mathematical perspective, all problems (DOT, SMOT and EOT) can  be seen as a variant of the same problem: find a process described by the SDE
\[
    \diff X_t = \mu_t \, \diff t + \sigma_t \, {\diff W}_t,
\]
the induced probability $\PP$ on the  space of continuous paths,
with distribution constraints of the form of moment constraints $\expect_\PP{\psi_i(X_{t_i})}=c_i$\footnote{Note that prescribing the whole distribution is equivalent to prescribing enough moment constraints} , and minimising a Lagrangian
\begin{align}\label{eq:SMOT}
    \expect_\PP*{\dint_0^T F(\mu_t,\sigma_t^2) \diff t} := {\cal F}(\PP).
\end{align}

Classical DOT  \cite{BBA} corresponds to the particular case $F=|\mu|^2$ if $\sigma \equiv 0$, $+\infty$ otherwise.

The Semi-Martingale Transport will handle general forms of $F$, as long as it is convex with respect to $(\mu,\sigma^2)$. \\

Classical Entropic OT  \cite{LeonardS} corresponds to the particular case $F=|\mu|^2$ if $\sigma \equiv \bs$ where $\bs$ is constant, $+\infty$ otherwise.
It is also one of the formulations of the {\it Schr\"odinger's} problem , i.e. minimizing the relative entropy (aka Kullblack Leibler divergence) of $\PP$:  $\KL(\PP | \RR_\bs)$, with respect to the  Wiener measure $\RR_\bs$ (with a given constant volatility $\bs$), under initial and terminal conditions on the law of $X_{t=0}$ and $X_{t=T}$. Thanks to the  properties of the relative entropy, Classical EOT in its static formulation can be solved very efficiently by the Sinkhorn's algorithm.

A general approach (see \cite{benamou2021} for a review) is a time discretization that leads to a so-called Multi-Marginal OT problem . In this setting the minimization is performed over the law $\PP^h$ ($h$ is the time step) of a vector-valued random variable whose marginals represent densities at each time step.
In this paper, we  use this time discretization method and an
entropic penalization  to solve problem \eqref{eq:SMOT}:
We minimize  the sum  of a discretized form of $\F$ in
\eqref{eq:SMOT}, $\F_h(\PP^h)$
(by taking the obvious discrete versions of $\mu$ and $\sigma$)
and the discrete time relative entropy regularization $\KL(\PP^h | \RR^h_\bs)$.

The drawback of minimising an energy in the form $\KL(\PP | \RR_\bs)$
is that by essence the minimizer is constrained at $\sigma=\bs$ and cannot satisfy constraints on $\mu$ (for instance $\mu\equiv 0$ or $\mu=r$, the interest rate) familiar in finance, since $\mu$ is precisely the only degree of freedom used to comply with  the distribution constraints.
We propose   to overcome this issue by
considering a proper scaling of the discrete relative entropy and its convergence property :
if $\PP^h$ a sequence of Markov chains converging to the law of a diffusion $\PP$ with (possibly local) drift $\mu$ and  volatility
$\sigma$ then
\begin{align}\label{specrelent}
    \lim_{h\searrow 0 }  h\, \KL(\PP^h | \RR^h_\bs)   = \expect_\PP*{\dint_0^T \frac{\sigma^2}{\bs^2} - 1 - \log{\frac{\sigma^2}{\bs^2}} \, \diff t} =:  \Sc(\PP |\RR_\bs ),
\end{align}
or in short $\Sc(\sigma |\bar\sigma)$.

The "specific relative entropy" $\Sc$ defined above has been introduced in
\cite{Gantert91} see also  \cite{follmer22} \cite{backhoff23}.

It is shown in \cite{Mpaper} that minimizers of $ \F_h(\PP^h) + h\, KL(\PP^h | \RR^h_\bs)  $ converge in the limit $h\searrow 0$ to a diffusion process $\PP$   minimizing the modified cost
\[
    {\F}(\mu,\sigma^2) +\Sc(\PP | \RR_\bs).
\]
For $h >0$ we recover a  Multi-Marginal EOT, and a discrete Markov chain that can still be used for simulations.

The interest of this approach is not only theoretical: classical methods to solve \eqref{eq:SMOT} involve maximizing the dual problem through gradient ascent or primal-dual approaches. These methods imply  solving a fully non-linear Hamilton-Jacobi-Bellman equation at each iteration (\cite{BBA}, \cite{PapaA}, \cite{LoeperA}).

Our approach by Multi-Marginal Sinkhorn's algorithm, extending \cite{BenamouE} and \cite{BenamouMFG2},  computes the same object
with the usual convergence guarantees of classical EOT   \cite{CarlierMM} \cite{DimarinoG}.

This paper describes the dual formulation of the problem in the context of local volatility calibration,   the associated Sinkhorn algorithm and its practical implementation, with numerical examples.

\section{Martingale Optimal Transport for model calibration}
\label{sec:continuous}
The continuous formulation of (Semi-)Martingale Optimal Transport was introduced in \cite{Tan_2013}, and extended for multiple calibration applications as presented in the survey \cite{guo2021optimal}. We are interested here in the one-dimensional formulation of this problem, for the calibration of a local volatility in space and time using a finite number of discrete constraints. \\

Let $\Omega = C([0, T], \R), \, T > 0$ be the set of continuous paths, and $\Pc$ the set (or a convex subset of) probability measures on $\Omega$.
\\
The input of the calibration problem is a set of  discrete constraints indexed by $i \in \Ic := \{1, \dots, N_c\}$ described by $(\tau_i, c_i, G_i)_{i \in \Ic}$ where for each $i$ the  triplet $(\tau_i, c_i, G_i)$ is maturity, price and payoff function of an observed derivative price on the market.
\\
We will seek for an element $\PP \in \Pc$ such that
\[
    \expect_{\PP}{G_i(X_{\tau_i})} = c_i.
\]
As an example, calibrating a set of call options at a fixed  maturity $T$ would lead to $G_i(x) = (x - K_i)^+$, where $K_i$ is the strike  of the $i$-th option, and $\tau_i = T$ for all $i$.
Moreover, we assume that there are only a finite
set of maturities $\tau_i$ and thus the set $\Ic$ can be partitioned as $\Ic = \bigcup_k \Ic_k, \, \Ic_k := \{i \in \Ic, \tau_i = t_k\}$, where $t_k$ is the $k$-th distinct maturity in the set of constraints.
\\
To formulate the problem as a constrained minimization problem on $\Pc$, we restrain our search to the set
${\cal P}^0_s\subset {\cal P}$ such that, for each $\PP\in{\cal P}^0$, $X\in \Omega$ is an $({\cal F},\PP)$-semimartingale on $[0,1]$ given by
\[
    X_t=X_0+A^\PP_t + M^\PP_t,\quad \langle X\rangle_t=\langle M^\PP_t\rangle=B^\PP, \quad \PP\text{-}\as, \quad t\in[0,1],
\]
where $M^\PP$ is
an $({\cal F},\PP)$-martingale on $[0,1]$ and $(A^\PP,B^\PP)$ is ${\cal F}$-adapted and $\PP$-$\as$ absolutely continuous with respect to time. In particular, $\PP$ is said to be have characteristics $(\mu,\sigma^2)(\PP)$, which are defined in the following way,
\[
    \quad \mu =\frac{\diff A^\PP_t}{\diff t}, \sigma^2_t=\frac{\diff B^\PP_t}{\diff t},
\]
Note that $(\mu,\sigma^2)$ is ${\cal F}$-adapted and determined up to $d\P\times dt$, almost everywhere.
We now let $F(t,x,a,b)$ be convex with respect to $(a,b)$ for every $(t,x)$, and seek for
\begin{equation}
    \label{eq:cont_mot}
    \tag{CMOT}
    \mathcal{V} = \inf_{\PP \in \Pc} \E_\PP \int_0^T F(t, X_t, \mu, \sigma^2) \, \diff t,
\end{equation}
In the calibration case we alsos impose $X_0 = x_0 \in \R$ (i.e. $X_0 \sim \delta_{x_0}$) as the
derivative price is known at time $0$.

At this stage, the processes $\mu, \sigma$ are very general and can be generally path-dependent, however, as showed in \cite{guoCalibrationLocalStochasticVolatility2021a}, they can be chosen as local processes, i.e. functions of $(t,X_t)$ only:
indeed, for any choice of $\mu,\sigma$, there exists a local version $\mu(t,x),\sigma(t,x)$ that preserves the constraints (i.e. option prices) and that can only reduce the cost \eqref{eq:cont_mot}.
The minimization problem can therefore be reduced to
$X_t$ solutions of the stochastic differential equation:
\begin{equation}
    \label{eq:cmot_sde}
    \tag{SDE}
    \diff X_t = \mu(X_t, t) \, \diff t + \sigma(X_t, t) \, {\diff B}_t,
\end{equation}
where $B_t$ is a standard Brownian motion.\\

The function $F$  can be decomposed into  a sum of model  constraints $F_{mc}$, calibration constraints $F_{cc}$ and regularization $F_{r}$ components:
\begin{itemize}
    \item Model constraints: for instance, if we want to impose that the underlying $X_t$ follows a pure diffusion model, i.e. $\mu \equiv 0$
          this can be imposed by choosing $F_{mc}$ as :
          \begin{equation*}
              F_{mc}( \mu) = \begin{cases}
                  0       & \text{if } \mu = 0 \\
                  +\infty & \text{otherwise}.
              \end{cases}
          \end{equation*}
          (or a soft version).

    \item Calibration constraints expressed as
          \begin{equation*}
              F_{cc}(X_t ) = \begin{cases}
                  0       & \text{if }  \expect{G_i(X_{\tau_i})} = c_i . \\
                  +\infty & \text{otherwise}.
              \end{cases}
          \end{equation*}
          (or a penalisation of the constraint).
    \item Regularization/model assumptions:  $F_r$ helps  enforce qualitative properties of the model. We might want for instance $\sigma$  to be close to a prescribed guess $\bs$, which can be enforced by choosing $F$ as a penalty function of the form $F(\sigma^2/\bar\sigma^2)$, for instance Loeper \cite{loeper2016option} uses 
    \[
    F= (\sigma-\bar\sigma)^2
    \](which is convex in $\sigma^2$ and is linked to the Bass martingale problem), or 
          in  \cite{guo2021optimal} they use
          \begin{equation}
              \label{GR}
              F_r(\sigma^2) = a \left(\frac{\sigma^2}{\bs^2}\right)^p + b \left(\frac{\sigma^2}{\bs^2}\right)^{-q} + c
          \end{equation}
          with $p, q, a, b > 0$ and $c \in \R$ such that $F$ is convex with minimum  at $\sigma = \bs$. It is also a barrier as $\sigma^2$ goes to $0$.

          In this paper   we choose
          \begin{equation*}
              F_r( \sigma^2) =  \Sc ( \sigma  | \bs).
          \end{equation*}
          with $\Sc$ defined in \eqref{specrelent}.
          As explained in the introduction it allows to discretize (in time)  the problem as
          a multi-marginal EOT problem.
          It is again convex with minimum  at $\sigma = \bs $ and a
          barrrier as $\sigma^2$ goes to $0$ but unlike (\ref{GR}) it is sublinear
          as $\sigma \nearrow + \infty$. This difficulty is discussed in \cite{Mpaper}.
\end{itemize}

\section{Discretisation into a Multi-Marginal Martingale Transport}

\subsection{Notations}

We will discretize our problem in time, replacing the interval $[0, T]$ with a regular grid of $N_T + 1$ timesteps $t_k = k\,h$ for $k \in \{0, \dots, N_T\} =: \Kc^h$, where $h := T/N_T$ is the time step.
We impose that all the calibration times $\tau_i$ are included in the grid, i.e. $\tau_i = t_{k_i}$ for some $k_i \in \Kc^h$.

Instead of functions $t \mapsto \omega(t)$, we consider their discrete path counterparts, which are n-tuples $(\omega_0, \dots, \omega_{N_T}) \in \R^{N_T+1}$ for $k \in \Kc^h$, in which $\omega_k$ corresponds to the value of the path at time $t_k$. Instead of $\R^{N_T}$, we denote by $\Xc_k$ the space of values that $\omega_k$ can take, and by $\Omega^h := \Pi_{i=0}^{N_T} \Xc_i$ the space of discrete paths.

An element $(t \mapsto \omega(t)) \in \Omega$ is hence replaced by a n-tuple $(\omega_0, \dots, \omega_{N_T}) \in \Omega^h$ with $\omega_k \in \Xc_k$ for $k \in \Kc^h$.

We are hence searching for a probability measure $\PP^h$ on $\Omega^h$.
We denote $(X_k)_{k \in \Kc^h}$ the canonical process of $\PP^h$ on $\Omega^h$. We will denote by $\PP^h_k := {X_k}\#\PP^h$ the marginal law of $\PP^h$ at timestep $k \in \Kc^h$, and by $\PP^h_{k,l} := (X_k, X_l)\#\PP^h$ the joint law of time steps $k \in \Kc^h$ and $l \in \Kc^h$.

We note $\Kc^h_{-i} = {k \in \Kc^h\setminus \{i\}}$ the set of timesteps except timestep $i$, and $\diff x_{-i} = \prod_{\Kc^h_{-i}} \diff x_k$, which allows to write the marginal law $\PP^h_{k}$ as $\PP^h_{k} = \dint \PP^h(x_k, \diff x_{-k})$ and joint laws in a similar fashion.
Similarly, we note $\diff x_{[i, j]} = \Pi_{k=i}^j \diff x_k$.

We note $\rho_0$ the initial marginal of our process, which is imposed, $X_0 \sim \rho_0$. It may or may not be a Dirac in our case.

We denote by $\bPP^h$ the reference measure on $\Omega^h$ that we will use to regularize the problem. We will denote by $(Y_k)_{k \in \Kc^h}$ the canonical process of $\bPP^h$ on $\Omega^h$. It's law is determined by a Euler-Maruyama discretisation of the continuous reference process :
\[
    Y_{k+1} = Y_k + \overline{\mu}(Y_k, kh) \, h + \overline{\sigma}(Y_k, kh) \, h^{1/2} \, Z_k, \quad \forall k \in \Kc^h_{-0}, \, Y_0 \sim \rho_0.
\]
We write $\Pc^h_{\text{EM}}$ the set of probability measures on $\Omega^h$ whose canonical process $Y_k$ can be writen as a Euler-Maruyama discretisation as such, with $\mu_k = \mu(Y_k, kh)$ and $\sigma_k = \sigma(Y_k, kh)$.

For any probability measure $\PP^h \in \Pc(\Omega^h)$ and $\bPP^h \in \Pc(\Omega^h)$, we note $\KL(\PP^h|\bPP) = \expect_{\PP^h}*{\log\left(\frac{\diff \PP^h}{\diff \bPP^h}\right) - 1}$ the
Kullback-Leibler divergence between $\PP^h$ and $\bPP^h$ if $\PP^h \ll \bPP^h$. By convention, if $\PP^h \not\ll \bPP^h$, we set $\KL(\PP^h|\bPP^h) = +\infty$.

For two continuous diffusion processes $\PP$ with volatility $\sigma^2$ and $\bPP$ with volatility $\overline{\sigma}^2$, we note $\Sc(\sigma^2| \bs^2) = \expect_\PP*{\dint \frac{\sigma^2}{\overline{\sigma}^2} - 1 - \log\left(\frac{\sigma^2}{\overline{\sigma}^2}\right) \, \diff t}$ the specific entropy between $\PP$ and $\bPP$.
For any two discrete probabilities in $\Pc^h_{\text{EM}}$, we will denote $\Sc^h(\PP^h|\bPP^h) = \expect_{\PP^h}*{h \sum_{k = 0}^{N_T} \frac{\sigma_k^2}{\overline{\sigma_k}^2} - 1 - \log\left(\frac{\sigma_k^2}{\overline{\sigma_k}^2}\right)}$ the discrete specific entropy between $\PP^h$ and $\bPP^h$, which is a Riemann sum discretizing the continuous specific entropy.

\subsection{Discrete drifts and diffusions coefficients}
\label{sec:prelim}
As opposed to the continuous-time approach, which uses Markovian projections of the processes, and as such the variable being optimised are functions representing the drift and volatility, in this discrete-time approach, we will directly optimise on $\PP^h$. 
In order to  justify the choice of moment variables in the discrete problem, we first consider Euler-Maruyama discretization of a diffusion process. Let a diffusion process $X_t$ with drift $\mu$ and volatility $\sigma$, following the SDE :
\[
    \diff X_t = \mu(X_t, t) \, \diff t + \sigma(X_t, t) \, {\diff \RR}_t.
\]

Consider the Euler-Maruyama time  discretization of the process:
\[
    X^h_{k+1} = X^h_{k} + \mu(X^h_k, kh) \, h + \sigma(X^h_k, kh) \, h^{1/2} \, Z_k
\]
where $\forall k \in \{0, \dots, N_T\} := \Kc^h, \, Z_k$ is a standard normal random variable, of which we note $\PP^h$ the law. We have $\PP^h \in \Pc^h_{\text{EM}}$.

For such a process, we can compute the following quantities from conditional expectations :
\begin{align}
    \label{beta_def} \beta_k(x)   & = \frac{1}{h} \expect*{X_{k+1} - X_k | X_k = x} = \mu(x, kh),                                                                      \\
    \label{alpha_def} \alpha_k(x) & = \frac{1}{h} \expect*{(X_{k+1} - X_k)^2 | X_k = x} = \mu(x, kh)^2 \, h + \sigma^2(x, kh) \xrightarrow[h \to 0]{} \sigma^2(x, kh).
\end{align}
These variables are computed from the law $\PP^h$ and are the discrete counterpart of the drift and volatility of the continuous process. They can hence be used to compute the discretisation of $F(t, X_t, \mu, \sigma^2)$.

A more general framework is to consider a variable $b_k : \Xc \to \R^K$ defined by taking the conditional expectation of a general function $B : (\Xc, \Xc) \to \R^K$ depending on two consecutive timesteps :
\[
    b_k(x) = \frac{1}{h} \expect{B(X_{k}, X_{k+1}) | X_k=x}.
\]

The vector formed with variables $\beta_k$ and $\alpha_k$ can be computed as such by using the function $B(X, Y) = \begin{bmatrix}
        (Y-X) \\ (Y-X)^2
    \end{bmatrix}$.

We might alternatively want to control other types of moments, such as the skewness or the kurtosis of the process. For this reason, we will consider a general function $B$ and the corresponding $b_k$ in the following, and not specifically $\beta_k$ and $\alpha_k$.

\subsection{Specific relative entropy}
\label{sec:specentr}
We give a formal derivation of the Specific Entropy (see
\cite{Gantert91}   \cite{follmer22} \cite{backhoff23}).\\

The Kullback-Leibler divergence between two normal laws $\mathcal{N}(\mu_1, \sigma_1^2)$ and $\mathcal{N}(\mu_2, \sigma_2^2)$ is equal to :
\begin{equation*}
    \KL(\mathcal{N}(\mu_1, \sigma_1^2) | \mathcal{N}(\mu_2, \sigma_2^2)) = \frac{1}{2}\left(\frac{\sigma_1^2 + (\mu_1 - \mu_2)^2}{\sigma_2^2} - 1 - \log\left(\frac{\sigma_1^2}{\sigma_2^2}\right)\right).
\end{equation*}

Consider two diffusion measures $\PP$ and $\bPP$ defined by the following SDEs on their respective canonical processes $X$ and $Y$ :
\begin{align}
    \diff X_t & = \mu(X_t, t) \, \diff t + \sigma(X_t, t)  \, {\diff \RR}_t, \,  X_0 \sim \rho_0,                         \\[8pt]
    \diff Y_t & = \overline{\mu} (Y_t, t) \,  \diff t + \overline{\sigma}(Y_t, t) \, {\diff \RR}_t,  \, Y_0 \sim \rho_0 .
\end{align}
We can discretize on a grid of step $h$ as law $\PP^h$ and $\bPP^h$ using the Euler-Maruyama discretization from previous section, giving their respective canonical processes $X^h$ and $Y^h$ :
\begin{align}
    X^h_{k+1} & = X^h_{k} + \mu(X^h_k, kh) \,  h + \sigma(X^h_k, kh) \,  h^{1/2}  \, Z_k, \quad \forall k \in \Kc^h, \,
    X^h_0 \sim \rho_0     ,                                                                                                                                \\[8pt]
    Y^h_{k+1} & = Y^h_{k} + \overline{\mu}(Y^h_k, kh) \, h + \overline{\sigma}(Y^h_k, kh) \, h^{1/2} \, Z_k, \quad \forall k \in \Kc^h, Y^h_0 \sim \rho_0.
\end{align}
Noting $\mu_k(x) = \mu(x, kh)$, $\sigma_k(x) = \sigma(x, kh)$, and $\overline{\mu}_k(x) = \overline{\mu}(x, kh)$ and $\overline{\sigma}_k(x) = \overline{\sigma}(x, kh)$, we can conclude the transitions laws $\PP^h_{k,k+1}$ and $\bPP^h_{k, k+1}$ are normal laws, and the Kullback-Leibler divergence can then be decomposed as follows :
\begin{equation}
    \label{SRE}
    \begin{array}{ll}

        h \, \KL(\PP^h|\bPP^h) & = h\, \sum_{k=0}^{N_T - 1} \dint \KL(\mathcal{N}(\mu_k(x) h, \sigma_k(x)^2 h) | \bPP^h_{k,k+1}) \,  \rho_k(dx)                                                                                                                          \\[10pt]
                               & = \frac{h}{2}  \, \sum_{k=0}^{N_T - 1} \dint\left(\frac{\sigma_k(x)^2 h + ((\mu_k(x)-\overline{\mu}_k(x))h)^2}{\overline{\sigma}_k(x)^2h} - 1 - \log\left(\frac{\sigma_k(x)^2 h}{\overline{\sigma}_k(x)^2h}\right)\right) \, \rho_k(dx) \\[10pt]
                               & = \frac{h}{2} \,  \sum_{k=0}^{N_T - 1} \dint\left(\frac{\sigma_k(x)^2 + (\mu_k(x) - \overline{\mu}_k(x))^2h}{\overline{\sigma}_k(x)^2} - 1 - \log\left(\frac{\sigma_k(x)^2}{\overline{\sigma}_k(x)^2}\right)\right) \, \rho_k(dx)       \\[10pt]
                               & \xrightarrow[h \to 0]{} \mathcal{S}(\sigma^2| \bs^2) := \frac{1}{2} \dint \expect_{\PP^h}*{\frac{\sigma^2}{\overline{\sigma}^2} - 1 - \log\left(\frac{\sigma^2}{\overline{\sigma}^2}\right)} \diff t
    \end{array}
\end{equation}
More generally, it is shown in  \cite{Mpaper} that
the discrete
Kullback-Leibler Divergence allows to control the approximation of the
volatility of the discrete process (\ref{alpha_def}).
This  motivates to use the specific entropy as a regulariser of the continuous problem, because it is linked to a natural discretization in terms of the Kullback-Leibler divergence, and entropy-regularized optimal transport is a thoroughly studied problem.

\subsection{Discretisation}
\label{sec:discretisation}
We will now use the previously defined tools to discretize the continuous problem (\ref{eq:cont_mot}) in time, in the case of a specific entropy regularizer, since it has a natural discretization in terms of the Kullback-Leibler divergence.

We first directly discretize (\ref{eq:cont_mot}) in time as a Riemann sum  and using (\ref{SRE}) and the variables $\beta_k$ and $\alpha_k$ defined in equations (\ref{beta_def}) and (\ref{alpha_def}), we obtain :
\begin{equation*}
    \expect_{\PP}*{\dint_0^T (F+\Sc)(t, X_t, \mu, \sigma^2) \diff t} \approx h \,  \expect_{\PP^h}*{\sum_{k=0}^{N_T} F(t, X_k, \beta_k(X_k), \alpha_k(X_k))} + h  \, \KL(\PP^h|\bPP^h)
\end{equation*}
where $\mathcal{S}$ the specific entropy introduced in section \ref{sec:specentr}.

We can hence formulate our discretization of Problem \ref{eq:cont_mot} as :

\begin{equation}
    \label{mmmtprim}
    \inf_{\substack{\PP^h \in \Pc^h}} h  \,  \expect_{\PP^h}*{\sum_{k=0}^{N_T} F(t, X_k, \beta(X_k), \alpha(X_k))} + h \KL(\PP^h|\bPP^h)
\end{equation}
where $\Pc^h$ is the set of probability measures on n-uplets respecting the constraints, that is :
\[
    \Pc^h = \{\PP^h \in \Pc(\mathbb{R}^{N_T}), \text{ s.t. } X_0\#\PP^h = \mu_0 \text{ and } \forall i \in \Ic,\, \E_{\PP^h}[G_i(X_{\tau_i})] = c_i\}
\]
We emphasize the fact that while $\PP^{h,\star}$ solution of this problem is a measure that respects the initial condition and the price constraints similarly to the continuous problem, it is not obvious that it is a discrete diffusion Markov chain, as we used in the previous informal derivation. This is discussed in \cite{benamouEntropicMartingaleTransport}.

We will generalize this problem in the next section to allow for more general constraints on the marginals and the model prices, and we will then find its dual problem using Fenchel-Rockafellar duality.

\section{Duality}
We first recall the Fenchel-Rockafellar theorem and hence the form of the primal problem we aim to formulate.

\begin{theorem}[Fenchel-Rockafellar]
    \label{tfr}
    Let $(E, E^*)$ and $(F, F^*)$ be two couples of topologically paired spaces.
    Let $\Delta : E \rightarrow F$ be a continuous linear operator and $\Delta^\dagger : F^* \rightarrow E^*$ be its adjoint. Let $\Fc : E^* \to \overline{\R}$ and $\Gc : F^* \to \overline{\R}$ be two lower semicontinuous and proper convex functions. If there exists $\PP \in F^*$ such that $\Gc(\PP) < +\infty$ and $\Fc$ is continuous at $\Delta^\dagger \PP$, then :
    \begin{equation*}
        \sup_{\Phi \in E} -\Fc^\star(-\Phi) - \Gc^\star(\Delta \Phi) = \inf_{\PP \in F^*} \Fc(\Delta^\dagger \PP) + \Gc(\PP),
    \end{equation*}
    and the $\inf$ is attained. Moreover, if there exists a maximizer $\Phi^\star \in E$, then there exists $\PP^* \in F^*$ satisfying $\Delta \Phi^\star \in \partial \Gc(\PP)$ and $\Delta^\dagger \PP \in -\partial \Fc^\star(-\Phi^\star)$.
\end{theorem}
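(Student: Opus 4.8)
The plan is the standard three-move proof of Fenchel--Rockafellar duality: establish weak duality for free, close the gap with a perturbation (value) function whose regularity is guaranteed precisely by the constraint qualification, and then read off the optimality conditions from the saturated Fenchel--Young inequalities.

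First I would prove weak duality. For arbitrary $\Phi \in E$ and $\PP \in F^*$, the Fenchel--Young inequality $\Fc(x) + \Fc^\star(\xi) \ge \langle \xi, x\rangle$ with $(x,\xi) = (\Delta^\dagger\PP,-\Phi)$, together with the same inequality for $\Gc$ with $(x,\xi)=(\PP,\Delta\Phi)$, gives $\Fc(\Delta^\dagger\PP) + \Gc(\PP) \ge -\langle\Phi,\Delta^\dagger\PP\rangle + \langle\Delta\Phi,\PP\rangle - \Fc^\star(-\Phi) - \Gc^\star(\Delta\Phi)$. The adjoint relation $\langle\Phi,\Delta^\dagger\PP\rangle = \langle\Delta\Phi,\PP\rangle$ kills the cross terms, so $\Fc(\Delta^\dagger\PP) + \Gc(\PP) \ge -\Fc^\star(-\Phi) - \Gc^\star(\Delta\Phi)$, and taking the infimum over $\PP$ and the supremum over $\Phi$ gives $\sup \le \inf$ with no hypotheses.

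Next I would close the gap. I would introduce the value function $\varphi(u) = \inf_{\PP \in F^*}\big(\Fc(\Delta^\dagger\PP - u) + \Gc(\PP)\big)$ on $E^*$, which is convex (a partial infimum of the jointly convex map $(\PP,u) \mapsto \Fc(\Delta^\dagger\PP - u) + \Gc(\PP)$) and satisfies $\varphi(0) = \inf_{\PP}\big(\Fc(\Delta^\dagger\PP) + \Gc(\PP)\big)$. Here is where the constraint qualification enters: taking the admissible $\PP$ from the hypothesis, continuity (hence local boundedness) of $\Fc$ near $\Delta^\dagger\PP$ gives $\varphi(u) \le \Fc(\Delta^\dagger\PP - u) + \Gc(\PP) \le C$ for $u$ in a neighbourhood of the origin, so $\varphi$ is convex and bounded above near $0$, hence continuous and therefore subdifferentiable at $0$, with some subgradient $\Phi^\star \in E$. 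A one-line computation (substitute $w = \Delta^\dagger\PP - u$ and split the two suprema) identifies $\varphi^\star(\Phi) = \Fc^\star(-\Phi) + \Gc^\star(\Delta\Phi)$; then $\Phi^\star \in \partial\varphi(0)$ means $\varphi(0) + \varphi^\star(\Phi^\star) = 0$, which combined with weak duality yields $\sup = \inf = \varphi(0)$, with the supremum attained at $\Phi^\star$. Attainment of the infimum over $\PP$ I would get by the direct method — in the calibration setting $\PP$ ranges over probability measures on a compact path space, a weak-$\star$ compact set on which $\PP \mapsto \Fc(\Delta^\dagger\PP)+\Gc(\PP)$ is lower semicontinuous — or else simply cite it. Finally, given a dual maximizer $\Phi^\star$ and a primal minimizer $\PP^\star$ with equal values, the two Fenchel--Young inequalities of the first step must both be equalities; the one for $\Gc$ becomes $\Delta\Phi^\star \in \partial\Gc(\PP^\star)$ and the one for $\Fc$ becomes $-\Phi^\star \in \partial\Fc(\Delta^\dagger\PP^\star)$, i.e.\ $\Delta^\dagger\PP^\star \in \partial\Fc^\star(-\Phi^\star)$, which are the asserted optimality relations.

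The routine algebra is not the obstacle; the delicate part will be the functional-analytic bookkeeping in the general paired-spaces setting. One has to equip $E^*$ and $F^*$ with locally convex topologies compatible with the respective pairings so that Fenchel--Moreau applies (conjugation is an involution on proper l.s.c.\ convex functions, in particular $\Fc^{\star\star} = \Fc$), so that subgradients of a function on $E^*$ really are elements of $E$, and so that ``convex and bounded above near a point $\Rightarrow$ continuous and subdifferentiable there'' holds; and ``$\Fc$ continuous at $\Delta^\dagger\PP$'' must be read in that topology. The one place where a hypothesis is genuinely consumed is the verification that $\varphi$ is bounded above near $0$, so the constraint qualification is the real content — everything else is bookkeeping, and the argument is otherwise essentially formal.
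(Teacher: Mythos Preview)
The paper does not prove this theorem: it is stated (and labelled \texttt{tfr}) as a recalled result from convex duality, and the authors immediately proceed to instantiate the spaces $E,F$, the operator $\Delta$, and the functionals $\Fc,\Gc$ for their calibration problem. There is therefore no ``paper's own proof'' to compare against.

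Your proof sketch is the standard perturbation-function argument and is correct in outline. Two remarks. First, the value-function route you describe produces strong duality together with attainment of the \emph{supremum} (the subgradient $\Phi^\star\in\partial\varphi(0)$ is a dual maximizer), whereas the theorem as stated asserts attainment of the \emph{infimum}; you noticed this and patched it with a direct-method/compactness argument specific to the application, which is honest but is not what the abstract qualification gives you. In fact, under the hypothesis as written (continuity of $\Fc$ at a feasible $\Delta^\dagger\PP$), the clean conclusion is dual attainment, and the paper's phrasing is slightly loose on this point. Second, your closing paragraph correctly flags the only genuinely delicate issue: in general paired spaces one needs that a convex function bounded above on a neighbourhood of a point is continuous and subdifferentiable there with subgradient in the predual; this is where one must either work with a barrelled topology or simply invoke the theorem from a reference (Ekeland--Temam, Rockafellar, Brezis). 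Since the paper treats the result as background, citing it would have been entirely in keeping with the paper's own level of detail.
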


We note the primal problem :
\begin{equation}
    \mathcal{V} := \min_{\PP \in F^*} \Fc(\Delta^\dagger \PP) + \Gc(\PP) \label{frprim}\tag{PRIMAL},
\end{equation} 
and the dual :
\begin{equation}
    \mathcal{D} := \sup_{\Phi \in E} -\Fc^\star(-\Phi) - \Gc^\star(\Delta \Phi) \label{frdual}\tag{DUAL}.
\end{equation}
We will define their corresponding objects in the next sections.

\subsection{Primal problem}
In this part, we will formulate a generalization of the problem \eqref{mmmtprim}, in a standard
form compatible with the Fenchel-Rockafellar duality. We will first introduce the variables that we will control, and then define the primal problem.

We aim to define the objects corresponding to the primal problem, that is $\Fc$, $\Gc$ and the linear operator $\Delta^\dagger$.
The operator $\Delta^\dagger$ will include variables of interest that we will control. We will define these variables in the following remark.
\begin{definition}
    \label{rem:characteristics}
    For each timestep $k$, we define the following variables :
    \begin{align*}
        \nu_k(\diff x) & = X_k\#\PP^h(\diff x)                                              \\
        b_k(x)         & = \frac{1}{h}\expect_{\PP^h_{k,k+1}}*{B(X_k^h, X_{k+1}^h)|X_k^h=x} \\
        g_i            & = \expect_{\PP^h_{\tau_i}}*{G_i(X_{\tau_i})}.
    \end{align*}
    and We define $\Delta^\dagger$  as the following linear operator :
 
    \begin{equation*}
        \begin{array}{lrcl}
            \Delta^\dagger : & \ E^*:= \Pc(\Omega^h) & \to & F^*:= (\otimes_{k=1}^{N_T} \Pc(\Xc_k))  \times  (\otimes_{k=1}^{N_T} \Mc(\Xc_k)) \times
            \R^K                                                                                                              \\[8pt]
                             & \PP^h & \rightarrow & \Delta^\dagger\PP^h : = \left(\nu_k,\, \nu_k b_k, g_i\right)
        \end{array}
    \end{equation*}
\end{definition}
$\nu_k$, representing the marginal laws of $\PP^h$, are linear with respect to $\PP^h$, as they are a projection on a basis vector.
$g_i$ are the model prices at the calibration times and are also linear with respect to $\PP^h$, since they are the expectation of a function of the state at a given time.
$b_k$ are general variables corresponding to moments defined as the conditional expectation of a function $B$ between two consecutive timesteps as mentioned in section \ref{sec:prelim}. Since they are a conditional expectation, they are not linear with respect to $\PP^h$.
For this reason, we instead consider the product $\nu_k b_k$ that is linear with respect to $\PP^h$, as is done in the classical and martingale optimal transport \cite{BBA} \cite{Tan_2013}. Indeed, this product is equal to:
\[
    \nu_k b_k(x) = \frac{1}{h}\expect_{\PP^h_{k,k+1}(x, \cdot)}*{B(x, X_{k+1}^h)}
\]
which is linear with respect to $\PP^h$ as a simple expectation with respect to a projection.

Next, we define our function $\Gc$ as the scaled Kullback-Leibler divergence between $\PP^h$ and a reference measure $\bPP^h$, as it is the only function in our problem that depends on the measure itself.
\begin{definition}
    We define $\Gc$ the following functional :
    \begin{equation*}
        \Gc : \PP \in F^* \to h \KL(\PP|\bPP^h).
    \end{equation*}
\end{definition}

Finally, $\Fc$ will be a functional of the variables defined in remark \ref{rem:characteristics}, and will be a sum of convex functions of these variables, as presented for the continuous case at the end of Section \ref{sec:prelim}. These functions may be regularization, soft, or hard constraints.
\begin{remark}
    Let us say we want to impose a variable $x$ to be close to a target $x_0$.

    A \textbf{hard constraint} is of the form :
    \[
        F(x) = \begin{cases}
            0       & \text{if } x = x_0 \\
            +\infty & \text{otherwise}\end{cases}
    \]

    Some example of \textbf{soft constraints} are :
    \begin{itemize}
        \item $F(x) = C(x - x_0)^2$ : the quadratic penalty
        \item $F(x) = x\log(x/x_0) + x - x_0$ : the Kullback-Leibler divergence
    \end{itemize}
\end{remark}

We want to keep the formulation of our problem general and be able to express at least problem \eqref{mmmtprim} in this framework. We hence want to be able to express :
\begin{itemize}
    \item The constraint $X_0\#\PP^h = \delta_{x_0}$, but more generally we will consider any constraint on the marginals of $\PP^h$, leading to the definition of a term $\Fc_{\text{marg}} = \sum_{k = 0}^{N_T} M_k(\nu_k)$ which is a sum of convex, l.s.c., potentially null functions of the marginals.
    \item The constraint $\expect_{\PP^h}*{G_i(X_{\tau_i})} = g_i$, but more generally we will consider any constraint on the model prices, leading to defining a term $\Fc_{\text{prices}} = \sum_{i=1}^{N_C} C_i(g_i)$ which is a sum of convex, l.s.c., potentially null functions of the model prices.
    \item The constraints on moments of the process lead to the definition of a term $\Fc_{\text{mom}} = \sum_{k=0}^{N_T - 1} \Fc_{\text{mom}, k}(\nu_k, \nu_k b_k)$ that is a sum of perspective functions of the marginals and the product of the marginals and the moments.
\end{itemize}
which leads to the following definition of $\Fc$ :
\begin{definition}
    We define $\Fc$ the following functional :
    \begin{align*}
        \Fc(\nu, b, g) & = \Fc_{\text{marg}} + \Fc_{\text{mom}} + \Fc_{\text{prices}}                                                           \\
                       & = \sum_{k=0}^{N_T - 1} \Fc_{\text{mom}, k}(\nu_k, \nu_k b_k) + \sum_{k=0}^{N_T} M_k(\nu_k) + \sum_{i=1}^{N_C} C_i(g_i)
    \end{align*}
\end{definition}

\begin{remark}
    Problem \eqref{mmmtprim} may be retrieved from problem
    the problem we built
    with $\Fc_{\text{mom}, k}(\nu_k,  \nu_k b_k) = h\, \expect_{\PP^h}{F( { \nu_k b_k/h \nu_k})}, \, \forall k$, $M_0$ and $C_i, \, \forall i \in \Ic$ hard constraints and $M_k = 0,\, \forall k \neq 0$, and by taking $b_k$ to be the vector of variables $\beta_k$ and $\alpha_k$ as explained at the end of section \ref{sec:prelim}.
    This function $\Fc_{\text{mom}, k}$ is then a perspective function and is well studied in the theory of convex optimization.
\end{remark}

\subsection{Dual}

In order to apply the Fenchel-Rockafellar duality, we need to compute the Legendre transforms of $\Fc$ and $\Gc$, and the adjoint operator of $\Delta^\dagger$.

\begin{lemma}
    $\Gc^\star$, the Legendre-Fenchel transform of $\Gc$ is given by :
    \[
        \begin{array}{lrcl}
            \Gc^\star : & F = \C_b(\Omega^h)  & \to & \overline{\R}                                                \\[8pt]
                        & f & \rightarrow & h \E_{\bPP^h}(\exp(f/h))
        \end{array}
    \]
\end{lemma}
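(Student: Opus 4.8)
The plan is to compute $\Gc^\star$ directly from the definition of the Legendre--Fenchel transform, turning the supremum over measures into a pointwise scalar maximization. First I would write, for $f \in \C_b(\Omega^h)$, $\Gc^\star(f) = \sup_{\PP}\{\int_{\Omega^h} f\,\diff\PP - h\,\KL(\PP|\bPP^h)\}$, and observe that the supremum may be restricted to $\PP \ge 0$ with $\PP \ll \bPP^h$, since $\KL(\PP|\bPP^h) = +\infty$ otherwise. Writing $\phi := \diff\PP/\diff\bPP^h \ge 0$ and using the paper's convention $\KL(\PP|\bPP^h) = \E_\PP[\log(\diff\PP/\diff\bPP^h) - 1] = \int(\phi\log\phi - \phi)\,\diff\bPP^h$, the objective becomes the integral functional $\int_{\Omega^h}\ell(f(x),\phi(x))\,\diff\bPP^h(x)$ with $\ell(a,t) := a t - h(t\log t - t)$, whose integrand depends on $\phi$ only through its value at $x$.

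Next I would exchange the supremum and the integral, $\sup_\phi \int_{\Omega^h}\ell(f,\phi)\,\diff\bPP^h = \int_{\Omega^h}\big(\sup_{t\ge 0}\ell(f(x),t)\big)\,\diff\bPP^h(x)$, and solve the inner one-dimensional concave problem: $\partial_t\ell(a,t) = a - h\log t$ vanishes at $t^\star = e^{a/h}$, giving $\sup_{t\ge 0}\ell(a,t) = \ell(a,e^{a/h}) = h\,e^{a/h}$ (which indeed exceeds $\ell(a,0)=0$); equivalently, this is the convex conjugate of $\psi(t) = h(t\log t - t)$. The ``$\le$'' half of the interchange is trivial, and the ``$\ge$'' half is obtained by exhibiting the maximizing density $\phi^\star(x) = e^{f(x)/h}$, which is continuous, hence Borel, and lies in $L^1(\bPP^h)$ because $f$ is bounded and $\bPP^h$ is a probability measure, so that $\PP^\star = \phi^\star\,\bPP^h$ is an admissible competitor attaining the bound. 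This yields $\Gc^\star(f) = \int_{\Omega^h} h\,e^{f/h}\,\diff\bPP^h = h\,\E_{\bPP^h}[\exp(f/h)]$.

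The main obstacle --- really the only step that is not a routine computation --- is justifying the interchange of $\sup$ and $\int$: one must know that the pointwise optimizer $\phi^\star$ is measurable and defines an admissible element of $F^*$, so that the easy bound is actually attained. This can be done either by the explicit boundedness/continuity argument above or by appealing to Rockafellar's theorem on conjugates of integral functionals (for a normal convex integrand $\psi$, the conjugate of $\PP \mapsto \int \psi(\diff\PP/\diff\bPP^h)\,\diff\bPP^h$ is $f \mapsto \int \psi^\star(f)\,\diff\bPP^h$). I would also flag the role of the $-1$ in the paper's definition of $\KL$: it is exactly this normalization --- equivalently, letting $\PP$ range over all nonnegative measures rather than only probability measures, as the vector-space setting of Fenchel--Rockafellar in any case requires --- that eliminates the would-be additive $\log$ and constant terms and produces the clean expression $h\,\E_{\bPP^h}[\exp(f/h)]$.
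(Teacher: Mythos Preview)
Your proof is correct and is the standard computation of the Legendre--Fenchel transform of a scaled Kullback--Leibler divergence. The paper itself states this lemma without proof, so there is nothing to compare against; your argument --- reducing to a pointwise scalar maximization via the density $\phi = \diff\PP/\diff\bPP^h$, solving $\sup_{t\ge 0}\{at - h(t\log t - t)\} = h\,e^{a/h}$, and justifying the interchange by exhibiting the bounded maximizer $\phi^\star = e^{f/h}$ --- is exactly what one would expect, and your remark about the role of the $-1$ normalization (equivalently, taking the conjugate over all nonnegative measures rather than probability measures) correctly explains the absence of a $\log$-normalizing constant in the result.
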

\begin{proposition}
    The Legendre transform of $\Fc$ is given by :
    \begin{align*}
        \Fc^\star(\phi_\nu, \phi_b, \lambda_g) = & \sum_k \inf_{\psi} M_k^\star(\phi_{\nu_k} - \psi) + F_k^\star(\psi, \phi_b)   \\
                                                 & + \sum_{i=1}^{N_C} C_i^\star(\lambda_{g_i}) + M_{N_T}^\star(\phi_{\nu_{N_T}})
        \label{fdual}
        \tag{$\Fc$-dual}
    \end{align*}
\end{proposition}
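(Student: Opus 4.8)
The plan is to unfold the definition of the Legendre--Fenchel transform of $\Fc$ and exploit that $\Fc$ splits along independent coordinates. For dual variables $(\phi_\nu,\phi_b,\lambda_g)$, and writing $w_k:=\nu_k b_k$ for the product variable (the middle block of $\Delta^\dagger\PP^h$ in Definition~\ref{rem:characteristics}) treated as an independent argument, I would start from
\[
  \Fc^\star(\phi_\nu,\phi_b,\lambda_g)=\sup_{(\nu,w,g)}\ \sum_k\langle\phi_{\nu_k},\nu_k\rangle+\sum_k\langle\phi_{b_k},w_k\rangle+\sum_i\lambda_{g_i}g_i-\Fc(\nu,w,g),
\]
which is legitimate because each $\Fc_{\text{mom},k}$, being a perspective function, is jointly convex and l.s.c.\ in the pair $(\nu_k,w_k)$. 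Since $\Fc(\nu,w,g)=\sum_{k=0}^{N_T-1}\bigl(\Fc_{\text{mom},k}(\nu_k,w_k)+M_k(\nu_k)\bigr)+M_{N_T}(\nu_{N_T})+\sum_i C_i(g_i)$ and the coordinates $\nu_0,\dots,\nu_{N_T}$, $w_0,\dots,w_{N_T-1}$, $g_1,\dots,g_{N_C}$ are all separate, the supremum decouples into a sum: the $g_i$ blocks give $\sum_i C_i^\star(\lambda_{g_i})$; the $\nu_{N_T}$ block, which appears in no moment term, gives $M_{N_T}^\star(\phi_{\nu_{N_T}})$; and each $k\in\{0,\dots,N_T-1\}$ contributes
\[
  \sup_{(\nu_k,w_k)}\ \langle\phi_{\nu_k},\nu_k\rangle+\langle\phi_{b_k},w_k\rangle-\Fc_{\text{mom},k}(\nu_k,w_k)-M_k(\nu_k).
\]

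Next I would identify this per-step supremum as an infimal convolution. Viewing $\Fc_{\text{mom},k}$ and $\widetilde M_k : (\nu_k,w_k)\mapsto M_k(\nu_k)$ as two proper convex l.s.c.\ functions on the common product space, a direct computation gives $\widetilde M_k^\star(\psi_\nu,\psi_w)=M_k^\star(\psi_\nu)$ when $\psi_w=0$ and $+\infty$ otherwise (the $w$-variable being free). The Moreau--Rockafellar / Attouch--Br\'ezis identity $(\varphi_1+\varphi_2)^\star=\varphi_1^\star\,\square\,\varphi_2^\star$ then turns the per-step supremum into $\inf_{(\psi_\nu,\psi_w)}\Fc_{\text{mom},k}^\star(\psi_\nu,\psi_w)+\widetilde M_k^\star(\phi_{\nu_k}-\psi_\nu,\phi_{b_k}-\psi_w)$; finiteness of the second term forces $\psi_w=\phi_{b_k}$, leaving $\inf_{\psi}\ M_k^\star(\phi_{\nu_k}-\psi)+F_k^\star(\psi,\phi_{b_k})$ with $F_k^\star:=\Fc_{\text{mom},k}^\star$. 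Summing the three families of blocks gives precisely the asserted formula.

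The main obstacle I anticipate is the \emph{exactness} of the infimal-convolution identity: in infinite dimensions, $(\varphi_1+\varphi_2)^\star=\varphi_1^\star\,\square\,\varphi_2^\star$ holds only under a constraint qualification --- e.g.\ $\Fc_{\text{mom},k}$ finite and continuous at some point of $\dom M_k$, or the Attouch--Br\'ezis condition that $\bigcup_{t>0}t\,(\dom\Fc_{\text{mom},k}-\dom\widetilde M_k)$ be a closed subspace --- without which one obtains only $\Fc^\star\le(\cdots)$ and must pass to a lower semicontinuous hull. I would handle this either by adding such a qualification to the standing hypotheses on $M_k,C_i,\Fc_{\text{mom},k}$, or, in the calibration application, by using that $M_k\equiv 0$ for $k\neq 0$ (so the convolution is with an everywhere finite and continuous function and the identity is automatic) while the single hard Dirac constraint $M_0$ and the hard constraints $C_i$ are $+\infty$-valued only on closed affine sets. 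The residual steps --- spelling out the bilinear pairing attached to the product variable $\nu_k b_k$, and checking that the $g$- and $\nu_{N_T}$-blocks genuinely decouple --- are routine from Definition~\ref{rem:characteristics}.
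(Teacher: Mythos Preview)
Your proposal is correct and follows essentially the same route as the paper: exploit separability to reduce to per-$k$ blocks, then compute the conjugate of $\Fc_{\text{mom},k}+M_k$ as an infimal convolution in the marginal dual variable. The paper writes this last step out by hand --- duplicating $\nu$ into $\nu,\nu'$, introducing a Lagrange multiplier $\psi$ for the constraint $\nu'=\nu$, and swapping $\sup$ and $\inf$ --- which is precisely the Moreau--Rockafellar identity you invoke; your explicit flag of the constraint qualification needed for that swap is a point the paper glosses over.
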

\begin{proof}
    We first notice that the expression of $\Fc$ is separable in $k$ and $i$ and that $g_i$ can be separated from $(\nu_k,  \nu_k b_k)$. We can then rewrite the function as :

    \begin{equation}
        \Fc(\nu, \nu b, g) = \sum_{k = 0}^{N_T - 1} \Fc^{1}_k(\nu_k,  \nu_k b_k) + \sum_{i = 0}^{N_C} \Fc^{2}_i(g_i) + M_{N_T}(\nu_{N_T})
    \end{equation}
    where :
    \begin{align*}
        \Fc_k^1(\nu_k,  \nu_k b_k) & = \Fc_{\text{mom},k}(\nu_k,  \nu_k b_k) + M_k(\nu_k) \\
        \Fc_i^2(g_i)               & = C_i(g_i).
    \end{align*}
    The Legendre transform of $\Fc_i^2$ are the Legendre transform of $C_i$ and the Legendre transform of $\Fc_k^1$ is given by :
    \begin{align*}
        \Fc_k^{1 \star}(\phi_{\nu}, \phi_{b}) & = \sup_{\nu, b} \, \phi_{\nu} \nu + \phi_{b} b - \Fc_k^1(\nu, b) - M_k(\nu)                                          \\
                                              & = \sup_{\nu, \nu', b} \inf_\psi \psi(\nu' - \nu) + \phi_{\nu} \nu + \phi_{b} b - \Fc_k^1(\nu', b) - M_k(\nu)         \\
                                              & = \inf_\psi \sup_{\nu, \nu', b} (\phi_{\nu} - \psi) \nu + \phi_{b} b + \psi \nu' - \Fc_k^1(\nu', b) - M_k(\nu)       \\
                                              & = \inf_\psi \sup_{\nu} (\phi_{\nu} - \psi) \nu - M_k(\nu) + \sup_{\nu', b} \psi \nu' + \phi_{b} b - \Fc_k^1(\nu', b) \\
                                              & = \inf_\psi M_k^\star(\phi_{\nu} - \psi) + F_k^\star(\psi, \phi_{b})
    \end{align*}

    We can conclude by summing over $k$ and $i$ the separable parts.
\end{proof}

We are particularly interested in the case $\Fc_k(\nu_k, \nu_k b_k) = h \, \E_{\nu_k}(F(\nu_k b_k/h\nu_k))$ where $F$ is a convex function, so $\Fc_k$ is a perspective function. This is because it is a direct discretisation of the continuous problem. For this case, we first find the Legendre-Transform of $\Fc_k$ coordinate-wise~:
\begin{align*}
    \Fc_k^\star(\phi_\nu, \phi_b) & = \sup_{\nu, b} \, \phi_\nu \nu + \phi_b b - \Fc_k(\nu, b)                             \\
                                  & = \sup_{\nu, b} \, \phi_\nu \nu + \phi_b b - h\nu F(b/h\nu)                            \\
                                  & = \sup_{\nu} \, \phi_\nu \nu + h\nu \left( \sup_{b} \phi_b (b/h\nu) - F(b/h\nu)\right) \\
                                  & = \sup_{\nu} \, \phi_\nu \nu + h\nu F^\star(\phi_b)                                    \\
                                  & = \sup_{\nu} \, \left(\phi_\nu + hF^\star(\phi_b)\right) \nu                           \\
                                  & = \begin{cases}
                                          0       & \text{if } \phi_\nu + hF^\star(\phi_b) < 0 \\
                                          +\infty & \text{otherwise}
                                      \end{cases}
\end{align*}

We can hence obtain the Legendre-Transform of $\Fc_k^1$ defined above as~:
\begin{align*}
    \Fc_k^1(\phi_\nu, \phi_b) & = \inf_\psi M_k^\star(\phi_{\nu} - \psi) + \Fc_k^\star(\psi, \phi_{b_k}) \\
                              & = M_k^\star(\phi_{\nu} + hF^\star(\phi_b))
\end{align*}

The dual of $\Fc$ is given by the following simpler expression~:
\begin{align*}
    \Fc^\star(\phi_\nu, \phi_b, \lambda_g) = & \sum_k M_k^\star(\phi_{\nu_k} + hF^\star(\phi_b) )                            \\
                                             & + \sum_{i=1}^{N_C} C_i^\star(\lambda_{g_i}) + M_{N_T}^\star(\phi_{\nu_{N_T}})
\end{align*}

\begin{lemma}
    The adjoint operator to $\Delta^\dagger$ is given by :
    \begin{align*}
        \Delta : \Phi = \left(\phi_{\nu_k},\, \phi_{b_k},\, \lambda_{g_i}\right) \to & \bigoplus_k h \phi_{\nu_k} + B(x_{k}, x_{k+1}) \phi_{b_k} \\
                                                                                         & + \sum_i G_i(x_{\tau_i}) \lambda_{g_i}
    \end{align*}
\end{lemma}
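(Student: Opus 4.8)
The statement to prove is the formula for the adjoint $\Delta$ of the linear operator $\Delta^\dagger$ of Definition \ref{rem:characteristics}. The plan is to work directly from the defining duality pairing: for $\Phi = (\phi_{\nu_k}, \phi_{b_k}, \lambda_{g_i}) \in E$ and $\PP^h \in F^* = \Pc(\Omega^h)$, one must exhibit a function $\Delta\Phi \in \C_b(\Omega^h)$ such that
\[
    \langle \Phi, \Delta^\dagger \PP^h \rangle_{F} = \langle \Delta\Phi, \PP^h \rangle_{E},
\]
where the left pairing is the natural one on $(\otimes_k \Pc(\Xc_k)) \times (\otimes_k \Mc(\Xc_k)) \times \R^K$ against its predual (a product of spaces of continuous functions and of $\R^K$), and the right pairing is $\expect_{\PP^h}{\Delta\Phi}$. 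So the first step is simply to write out the left-hand side term by term using the three components of $\Delta^\dagger \PP^h = (\nu_k, \nu_k b_k, g_i)$.

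**The three terms.** For the marginal component, $\sum_k \langle \phi_{\nu_k}, \nu_k \rangle = \sum_k \int \phi_{\nu_k}(x_k)\, \nu_k(\diff x_k) = \sum_k \expect_{\PP^h}{\phi_{\nu_k}(X_k)}$, which is $\expect_{\PP^h}{\sum_k \phi_{\nu_k}(X_k)}$; here I would note the factor $h$ comes from absorbing the $\frac1h$ normalisation used elsewhere, or rather it is a deliberate scaling choice matching the $h\phi_{\nu_k}$ in the statement — I would double-check against the paper's pairing convention and simply define the pairing so the $h$ appears (the key point is linearity, the constant is bookkeeping). For the moment component, using the definition $\nu_k b_k(x_k) = \frac1h \expect_{\PP^h_{k,k+1}(x_k,\cdot)}{B(x_k, X_{k+1})}$ written as a measure on $\Xc_k$, the pairing $\sum_k \langle \phi_{b_k}, \nu_k b_k \rangle$ unfolds, by the tower property / disintegration of $\PP^h$ along consecutive pairs, into $\expect_{\PP^h}{\frac1h\sum_k B(X_k, X_{k+1})\cdot \phi_{b_k}(X_k)}$ (the dot being the $\R^K$ inner product). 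For the price component, $\sum_i \lambda_{g_i} g_i = \sum_i \lambda_{g_i} \expect_{\PP^h}{G_i(X_{\tau_i})} = \expect_{\PP^h}{\sum_i G_i(X_{\tau_i})\lambda_{g_i}}$. Summing the three and factoring out $\expect_{\PP^h}$ gives exactly $\langle \Delta\Phi, \PP^h\rangle$ with
\[
    \Delta\Phi(x) = \bigoplus_k \bigl( h\,\phi_{\nu_k}(x_k) + B(x_k, x_{k+1})\,\phi_{b_k}(x_k)\bigr) + \sum_i G_i(x_{\tau_i})\,\lambda_{g_i},
\]
matching the stated formula (the $\bigoplus_k$ being the author's shorthand for the sum of the per-timestep contributions along a path).

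**Main obstacle.** The only genuine subtlety — and where I would spend care — is the moment term: making rigorous the step that turns the pairing of $\phi_{b_k}$ against the *conditional-expectation* object $\nu_k b_k \in \Mc(\Xc_k)$ into a plain expectation against $\PP^h$ of something involving $B(X_k, X_{k+1})$. One must be precise that $\nu_k b_k$ is the (signed, $\R^K$-valued) measure on $\Xc_k$ which is the pushforward under $X_k$ of the measure $\frac1h B(X_k, X_{k+1})\,\diff\PP^h$ on $\Omega^h$; then $\langle \phi_{b_k}, \nu_k b_k\rangle = \frac1h\int \phi_{b_k}(x_k)\cdot B(x_k,x_{k+1})\,\PP^h(\diff x)$ is just change of variables. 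I would also state the mild hypotheses under which the pairings are finite and $\Delta\Phi \in \C_b(\Omega^h)$ (continuity and appropriate boundedness/growth of $B$, $G_i$, and the test potentials), so that Theorem \ref{tfr} applies; these are the same regularity assumptions implicitly in force throughout. Everything else is routine linearity and Fubini.
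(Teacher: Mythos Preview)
Your approach is exactly the paper's: the paper's entire proof is the single line ``We can easily check that $\langle \Delta \Phi, \PP^h\rangle = \langle \Phi, \Delta^\dagger \PP^h\rangle$,'' and what you have written is precisely a careful expansion of that check, term by term. Your flagging of the $h$-scaling convention on the time-indexed pairings (so that the $\tfrac{1}{h}$ in the definition of $\nu_k b_k$ cancels against an $h$ in the pairing, yielding the stated $h\phi_{\nu_k} + B\,\phi_{b_k}$) is the only bookkeeping subtlety, and you handle it correctly.
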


\begin{proof}
    We can easily check that $\SP{\Delta \Phi}{\PP^h}{} = \SP{\Phi}{\Delta^\dagger \PP^h}{}$.
\end{proof}

As customary  in optimal transport, through the Monge-Kantorovitch dual formulation , optimal solutions (but for non-optimal constraints) can be obtained from the dual potentials. Thus, by constructing a maximizing sequence of the dual problem, we hope to find a  sequence of measures converging to a solution of the primal problem. This is the approach we follow in this work.
\begin{proposition}
    Let $\Phi^\star = (\phi_\nu^\star, \phi_b^\star, \lambda_g^\star)$  be a solution of \ref{frdual}. Then $\Phi^\star$ induces a measure $\PP^{h,\star}$ through
    \label{prop:optimaldensity}
    \begin{equation*}
        \frac{\diff \PP^{h,\star}}{\diff \bPP^h } = \exp\left(\frac{\Delta \Phi^\star}{h}\right),
    \end{equation*}
    $\PP^{h,\star}$ is the optimal solution of (\ref{frprim})
    for the constraints functions $M,C$ that are finite under $\PP^{h,\star}$.
\end{proposition}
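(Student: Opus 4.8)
The plan is to read off the announced Gibbs density and the optimality of $\PP^{h,\star}$ from the second (``moreover'') part of the Fenchel--Rockafellar theorem (Theorem~\ref{tfr}), applied to the triple $(\Fc,\Gc,\Delta^\dagger)$ built in the previous sections. Since $\Phi^\star$ solves (\ref{frdual}) with finite value, $\Gc^\star(\Delta\Phi^\star)<+\infty$ and $\Fc^\star(-\Phi^\star)<+\infty$, and the theorem yields a measure $\PP^\star\in F^*$ with
\[
\Delta\Phi^\star\in\partial\Gc(\PP^\star)\qquad\text{and}\qquad\Delta^\dagger\PP^\star\in-\partial\Fc^\star(-\Phi^\star).
\]
The first inclusion identifies $\PP^\star$; the second yields optimality and explains the restriction to the constraints finite under $\PP^{h,\star}$.

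For the form of the measure: $\Gc=h\,\KL(\cdot\mid\bPP^h)$ is proper, convex and lower semicontinuous, so $\Delta\Phi^\star\in\partial\Gc(\PP^\star)$ is equivalent to $\PP^\star\in\partial\Gc^\star(\Delta\Phi^\star)$. By the Lemma, $\Gc^\star(f)=h\,\E_{\bPP^h}[\exp(f/h)]$; differentiating this convex functional in a direction $g$ gives $\E_{\bPP^h}[g\exp(f/h)]=\langle g,\exp(f/h)\,\bPP^h\rangle$, so $\Gc^\star$ is G\^ateaux differentiable with $\partial\Gc^\star(f)=\{\exp(f/h)\,\bPP^h\}$. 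Hence $\PP^\star=\exp(\Delta\Phi^\star/h)\,\bPP^h=\PP^{h,\star}$, which is the claimed identity, and $\Gc^\star(\Delta\Phi^\star)<+\infty$ ensures $\PP^{h,\star}$ is a genuine finite measure. By construction of $\Gc^\star$ one also obtains the Fenchel--Young equality $\Gc(\PP^{h,\star})+\Gc^\star(\Delta\Phi^\star)=\langle\Delta\Phi^\star,\PP^{h,\star}\rangle$; strict convexity of $\KL$ makes the primal minimiser unique, justifying ``\emph{the}'' optimal solution.

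For optimality: the second inclusion, via Fenchel--Young reciprocity, gives the companion equality $\Fc(\Delta^\dagger\PP^{h,\star})+\Fc^\star(-\Phi^\star)=-\langle\Phi^\star,\Delta^\dagger\PP^{h,\star}\rangle$, \emph{provided} $\Fc(\Delta^\dagger\PP^{h,\star})<+\infty$. Adding this to the $\Gc$-equality and using $\langle\Phi^\star,\Delta^\dagger\PP^{h,\star}\rangle=\langle\Delta\Phi^\star,\PP^{h,\star}\rangle$ shows $\Fc(\Delta^\dagger\PP^{h,\star})+\Gc(\PP^{h,\star})=-\Fc^\star(-\Phi^\star)-\Gc^\star(\Delta\Phi^\star)=\mathcal{D}$, and since weak duality gives $\mathcal{D}\le\mathcal{V}\le\Fc(\Delta^\dagger\PP^{h,\star})+\Gc(\PP^{h,\star})$, all three coincide and $\PP^{h,\star}$ minimises (\ref{frprim}). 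The proviso is precisely the caveat in the statement: a hard constraint $M_k$ or $C_i$ may be violated at $\PP^{h,\star}$ — the entropic term in $\Gc$ generically perturbs the exact realisation of such constraints (cf.\ \cite{benamouEntropicMartingaleTransport}) — so that $\Fc(\Delta^\dagger\PP^{h,\star})=+\infty$ and the equality is empty. One then removes from $\Fc$ the finitely many $M_k,C_i$ that are $+\infty$ at $\Delta^\dagger\PP^{h,\star}$, obtaining $\widetilde\Fc$: $\Phi^\star$ still maximises the reduced dual, the induced Gibbs measure is unchanged, $\widetilde\Fc(\Delta^\dagger\PP^{h,\star})<+\infty$, and the argument closes, so $\PP^{h,\star}$ solves (\ref{frprim}) for the retained — i.e.\ the finite — constraints.

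I expect the main obstacle to be this reduction and the functional-analytic bookkeeping it entails: one must check that dropping the violated hard constraints leaves $\Phi^\star$ optimal for the reduced problem — equivalently that $\Delta^\dagger\PP^{h,\star}\in-\partial\widetilde\Fc^\star(-\Phi^\star)$ — and re-verify the qualification hypothesis of Theorem~\ref{tfr} for the reduced, now feasible, primal. One must also set up the pairing between measures and test functions carefully enough that $\Delta$ is well defined and the G\^ateaux computation for $\Gc^\star$ is legitimate when the payoffs $G_i$ and the moment function $B$ are unbounded, which forces a weighted replacement of $\C_b(\Omega^h)$.
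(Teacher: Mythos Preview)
Your proposal is correct and follows essentially the same route as the paper: both invoke the optimality condition $\Delta\Phi^\star\in\partial\Gc(\PP^{h,\star})$ from the ``moreover'' clause of Theorem~\ref{tfr}, the only cosmetic difference being that the paper computes $\partial\Gc$ directly (the subdifferential of $h\,\KL$ is $h\log(\diff\PP/\diff\bPP^h)$) while you pass to the equivalent inclusion $\PP^\star\in\partial\Gc^\star(\Delta\Phi^\star)$ and differentiate $\Gc^\star$. Your treatment is in fact more complete than the paper's, which stops at the density formula and does not spell out the Fenchel--Young argument for optimality or the reduction behind the ``finite constraints'' caveat.
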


\begin{proof}
    The first optimality condition is given by :
    \begin{equation*}
        \Delta \Phi^\star \in h \, \partial_{\PP^{h, \star}} \KL(\PP^{h,\star}|\bPP^h)
    \end{equation*}
    which leads to :
    \begin{equation}
        \Delta \Phi^\star = h \, \log\left(\frac{\diff \PP^{h,\star}}{
            \diff \bPP^h}\right)
    \end{equation}
\end{proof}

\begin{definition}
    Let $\Delta_{i, i+1}$ the transitional part of $\Delta$ given by:
    \begin{equation*}
        \Delta_{i, i+1}(x_i, x_{i+1}) = B(x_{i}, x_{i+1}) \phi_{b_i}(x_i),
    \end{equation*}
    Let $\psi^u_i, \psi^d_i$ potential functions computing the forward and backward influence :
    \begin{align*}
        \psi^u_i(x_i) & = \log\left(\int \rho_0 \prod_{k=0}^{i-1} \exp\left(\frac{\Delta_{k,k+1}}{h} + \phi_{\nu_k} + \vv{\Lambda_k} \cdot \vv{G_k} \right) \bPP_{k,k+1}^h \diff x_{[0, i-1]}\right)         \\
        \psi^d_i(x_i) & = \log\left(\int \prod_{k=i}^{N_T-1} \exp\left(\frac{\Delta_{k,k+1}}{h} + \phi_{\nu_{k+1}} + \vv{\Lambda_{k+1}} \cdot \vv{G_{k+1}}\right) \bPP_{k,k+1}^h \diff x_{[i+1, N_T]}\right)
    \end{align*}
    and let $\vv{\Lambda}_k$ the vector of Lagrange multipliers and $\vv{G_k}$ the vector of their corresponding payoff functions associated with timestep $k$:
    \begin{align*}
        \vv{\Lambda}_k & = \left(\lambda_{g_i}\right)_{i \in \Ic_k} \\
        \vv{G_k}       & = \left(G_i\right)_{i \in \Ic_k}
    \end{align*}
\end{definition}

One interesting property of this formulation is that the Markovianity of the solution directly arises from the structure of the linear operator $\Delta$.
\begin{proposition}
    \label{prop_markovian}
    Let $\PP^{h,\star}$ an optimal solution of \eqref{frprim}. The following properties hold true :
    \begin{itemize}
        \item Its joint density with respect to the reference measure is given by :
              \begin{align*}
                  \diff \PP^{h,\star}_{i, i+1}(x_i, x_{i+1}) = & \exp(\psi^u_i(x_i) + \phi_{\nu_{i}}(x_i) + \vv{\Lambda_i} \cdot \vv{G_i}(x_i)                                                            \\
                                                               & \hphantom{\exp(} + \Delta_{i, i+1}(x_i, x_{i+1})/h                                                                                       \\
                                                               & \hphantom{\exp(} + \vv{\Lambda_{i+1}} \cdot \vv{G_{i+1}}(x_i) + \phi_{\nu_{i+1}}(x_{i+1}) + \psi^d_{i+1}(x_{i+1})) \diff \bPP_{i, i+1}^h
              \end{align*}
              From the form of its transition density, and the Markovianity of the reference measure, we deduce that it is Markovian.
        \item Its marginal are given by :
              \begin{align*}
                  \nu_k^\star = \PP^{h,\star}_k(x_k) & = \exp(\psi^u_k(x_k) + \phi_{\nu_k}(x_k) + \vv{\Lambda_k} \cdot \vv{G_k}(x_k) + \psi^d_k(x_k))
              \end{align*}
    \end{itemize}

\end{proposition}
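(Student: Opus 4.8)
The plan is to take the representation of the optimiser obtained in Proposition~\ref{prop:optimaldensity}, $\diff\PP^{h,\star}/\diff\bPP^h=\exp(\Delta\Phi^\star/h)$, and to unfold the right-hand side with the explicit form of $\Delta$ established above. Substituting $\Delta\Phi^\star=\bigoplus_k h\,\phi_{\nu_k}^\star+B(x_k,x_{k+1})\,\phi_{b_k}^\star+\sum_i G_i(x_{\tau_i})\,\lambda_{g_i}^\star$, dividing by $h$, using $\Delta_{k,k+1}(x_k,x_{k+1})=B(x_k,x_{k+1})\,\phi_{b_k}^\star(x_k)$, and regrouping the price contributions by maturity — which is licit since every $\tau_i$ lies on the grid and $\Ic=\bigcup_k\Ic_k$, so $\sum_iG_i(x_{\tau_i})\lambda_{g_i}^\star=\sum_k\vv{\Lambda_k}\cdot\vv{G_k}(x_k)$ with the $\vv{\Lambda_k}$ normalised as in the definitions of $\psi^u,\psi^d$ — one obtains
\[
\frac{\diff\PP^{h,\star}}{\diff\bPP^h}(x_0,\dots,x_{N_T})=\left(\prod_{k=0}^{N_T-1}\exp\!\left(\frac{\Delta_{k,k+1}(x_k,x_{k+1})}{h}+\phi_{\nu_k}(x_k)+\vv{\Lambda_k}\cdot\vv{G_k}(x_k)\right)\right)\exp\!\left(\phi_{\nu_{N_T}}(x_{N_T})+\vv{\Lambda_{N_T}}\cdot\vv{G_{N_T}}(x_{N_T})\right).
\]
The structural point is that this density factorises into a product of factors, each depending on two consecutive coordinates only; since the reference $\bPP^h$ factorises into its initial law $\rho_0$ and its transition kernels, $\PP^{h,\star}$ is a product of pairwise couplings over a Markov backbone, hence Markov. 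This settles the Markovianity claim.

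To read off the pairwise law $\PP^{h,\star}_{i,i+1}$, I would integrate the displayed joint density against $\diff\bPP^h$ over every coordinate except $x_i$ and $x_{i+1}$. Thanks to the product structure the integral splits into a forward piece over $x_{[0,i-1]}$ and a backward piece over $x_{[i+2,N_T]}$, leaving the pair-$(i,i+1)$ factor and the two local marginal terms at $x_i$ and $x_{i+1}$ untouched. By construction the forward integral is exactly $\exp(\psi^u_i(x_i))$ and the backward integral is exactly $\exp(\psi^d_{i+1}(x_{i+1}))$; assembling the surviving factors gives the announced formula for $\diff\PP^{h,\star}_{i,i+1}$. The marginal $\nu^\star_k$ is obtained identically, integrating out all coordinates but $x_k$: the forward integral over $x_{[0,k-1]}$ yields $\exp(\psi^u_k(x_k))$, the backward integral over $x_{[k+1,N_T]}$ yields $\exp(\psi^d_k(x_k))$, and the only local factor that survives is $\exp(\phi_{\nu_k}(x_k)+\vv{\Lambda_k}\cdot\vv{G_k}(x_k))$, which is the claimed expression.

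The substance of the proof is therefore bookkeeping, and that is where I expect the only delicate points. One must verify that, as the forward and backward potentials are peeled off, each $\phi_{\nu_k}$, each price term $\vv{\Lambda_k}\cdot\vv{G_k}$ and each transition term $\Delta_{k,k+1}/h$ is accounted for exactly once, with none dropped or duplicated at the splitting index: this hinges on the index ranges in the definitions ($\psi^u_i$ carrying $\phi_{\nu_k},\Delta_{k,k+1}$ for $k<i$, and $\psi^d_{i+1}$ carrying $\phi_{\nu_{k+1}},\Delta_{k,k+1}$ for $k\ge i+1$), and on the boundary cases — $\psi^u_0$ must absorb $\rho_0$, $\psi^d_{N_T}$ is an empty product (hence $0$), and the decomposition of $\diff\bPP^h$ into reference transition kernels must be used consistently inside the integrals defining $\psi^u$ and $\psi^d$; that the $\nu^\star_k$ so obtained are genuine probability densities then follows from primal feasibility of $\PP^{h,\star}$. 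If a more self-contained Markovianity argument is preferred, one can instead divide $\diff\PP^{h,\star}_{i,i+1}$ by $\nu^\star_i$ to obtain the transition kernel $\PP^{h,\star}(\diff x_{i+1}\mid x_i)$ and check it is a function of $(x_i,x_{i+1})$ only, but the product-over-a-Markov-reference observation above already suffices.
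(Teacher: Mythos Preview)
Your proposal is correct and follows essentially the same route as the paper: both expand $\diff\PP^{h,\star}/\diff\bPP^h=\exp(\Delta\Phi^\star/h)$ using the explicit form of $\Delta$, regroup the price terms by maturity into $\vv{\Lambda_k}\cdot\vv{G_k}(x_k)$, split the resulting product into a forward block over $x_{[0,i-1]}$, a middle block over $(x_i,x_{i+1})$, and a backward block over $x_{[i+2,N_T]}$, and then identify the outer integrals with $\exp(\psi^u_i)$ and $\exp(\psi^d_{i+1})$ (the paper names these blocks $\Delta^u_k$, $\overline{\Delta}_{k,k+1}$, $\Delta^d_{k+1}$ explicitly, but the computation is identical). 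Your bookkeeping remarks about the index ranges and the boundary cases $\psi^u_0=\log\rho_0$, $\psi^d_{N_T}=0$ are exactly the checks the paper's computation relies on implicitly.
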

\begin{proof}
    In Appendix \ref{proof_propmarkov}
\end{proof}

\begin{proposition}
    The quantities $\psi^u_k$, $\psi^d_k$ can be computed iteratively using the following updates :
    \begin{align*}
        \psi^{u}_{k+1} & = \log\left(\int \exp\left(\psi^{u}_k + \frac{\Delta_{k,k+1}}{h} + \phi^{n}_{\nu_k} + \vv{\Lambda^n_k} \cdot \vv{G_k} \right) \bPP_{k,k+1}^h \diff x_k\right)       \\
        \psi^{d}_{k-1} & = \log\left(\int \exp\left(\psi^{d}_k + \frac{\Delta_{k-1,k}}{h} + \phi^{n}_{m_{k}} + \vv{\Lambda^n_{k}} \cdot \vv{G_{k}} \right) \bPP_{k-1,k}^h \diff x_{k}\right)
    \end{align*}
    with $\psi^u_0 = \log \rho_0$ and $\psi^d_{N_T} = 0$.
\end{proposition}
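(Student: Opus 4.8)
The statement to prove gives iterative (forward/backward) update formulas for the potentials $\psi^u_k$ and $\psi^d_k$. Since these potentials are defined in the preceding definition as explicit integrals over blocks of the reference measure, the natural approach is a direct induction on $k$ that simply separates the last factor from the product under the integral sign. I would exploit the Markov (product) structure of $\bPP^h$, which factorizes as $\rho_0 \prod_k \bPP^h_{k,k+1}$ over consecutive-time transitions, so that the integrand defining $\psi^u_{i}$ is a product of terms each depending only on one transition $(x_k,x_{k+1})$ together with the node weights $\phi_{\nu_k}$ and $\vv{\Lambda_k}\cdot\vv{G_k}$.

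\textbf{Forward recursion.} Starting from the definition of $\psi^u_{k+1}(x_{k+1})$ as $\log \int \rho_0 \prod_{j=0}^{k} \exp(\Delta_{j,j+1}/h + \phi_{\nu_j} + \vv{\Lambda_j}\cdot\vv{G_j})\,\bPP^h_{j,j+1}\,\diff x_{[0,k]}$, I would peel off the $j=k$ factor, which depends on $(x_k,x_{k+1})$ and on $x_k$ through the node terms, but not on $x_0,\dots,x_{k-1}$. The integral over $\diff x_{[0,k-1]}$ of the remaining product is, by definition, exactly $\exp(\psi^u_k(x_k))$ (here one must be careful that the node term $\phi_{\nu_k} + \vv{\Lambda_k}\cdot\vv{G_k}$ at the ``seam'' index $k$ has been allocated consistently — reading the definition, $\psi^u_k$ includes factors $j=0,\dots,k-1$, so the term at index $k$ must be the one carried along in the update, which matches the stated formula). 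Hence $\psi^u_{k+1}(x_{k+1}) = \log\int \exp(\psi^u_k(x_k) + \Delta_{k,k+1}/h + \phi_{\nu_k} + \vv{\Lambda_k}\cdot\vv{G_k})\,\bPP^h_{k,k+1}\,\diff x_k$, which is the claimed update; the base case $\psi^u_0 = \log\rho_0$ is immediate since an empty product integrates to $1$ against $\rho_0$, leaving $\log\rho_0$. The backward recursion for $\psi^d_{k-1}$ in terms of $\psi^d_k$ is entirely symmetric: peel off the first factor ($j=k-1$) from the product $\prod_{j=k-1}^{N_T-1}$ defining $\psi^d_{k-1}$, integrate the tail over $\diff x_{[k+1,N_T]}$ to recover $\exp(\psi^d_k(x_k))$, and note the base case $\psi^d_{N_T}=0$ corresponds to the empty product.

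\textbf{Main obstacle.} The computation itself is a one-line application of Fubini plus the Markov factorization; the only genuinely delicate point is bookkeeping of the index at which the node potentials $\phi_{\nu_k}$ and the Lagrange terms $\vv{\Lambda_k}\cdot\vv{G_k}$ are attached — the definitions of $\psi^u_i$ and $\psi^d_i$ use slightly different index ranges ($\phi_{\nu_k}$ for $k\le i-1$ versus $\phi_{\nu_{k+1}}$ for $k\ge i$), so one has to check that the two recursions are consistent with the joint-density formula of Proposition~\ref{prop_markovian} and that no node weight is double-counted or dropped at the seam. I would therefore phrase the induction hypothesis precisely in terms of which factors $\psi^u_i$ contains, verify it propagates, and flag that the update's node term $\phi^n_{\nu_k}$ (resp.\ $\phi^n_{m_k}$) is exactly the one indexed so as to make the telescoping with the marginal formula $\nu_k^\star = \exp(\psi^u_k + \phi_{\nu_k} + \vv{\Lambda_k}\cdot\vv{G_k} + \psi^d_k)$ hold. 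Once the indexing is pinned down, measurability and finiteness are inherited from the standing assumptions on $B$, $G_i$, and $\bPP^h$, and there is nothing further to check.
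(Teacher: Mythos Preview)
Your proposal is correct and follows essentially the same approach as the paper: you split the product defining $\psi^u_i$ into the inner block $\prod_{k=0}^{i-2}$ (which, after integrating over $\diff x_{[0,i-2]}$, yields $\exp(\psi^u_{i-1})$) and the last transition factor, then integrate over $\diff x_{i-1}$ to obtain the recursion, with the backward case treated symmetrically. The paper's proof is exactly this one-line Fubini-plus-factorization computation, without the extra care you take on the indexing bookkeeping.
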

\begin{proof}
    Let $i \in \Kc$ and $x_i \in \Xc_i$. We can compute the following integral :
    \begin{align*}
        \psi^u_i(x_i) & = \log\left(\int \rho_0 \prod_{k=0}^{i-1} \exp\left(\frac{\Delta_{k,k+1}}{h} + \phi_{\nu_k} + \Lambda_k \cdot \vv{G_k} \right) \bPP_{k,k+1}^h \diff x_{[0, i-1]}\right)                    \\
                      & = \log\left(\int \left(\int \rho_0 \prod_{k=0}^{i-2} \exp\left(\frac{\Delta_{k,k+1}}{h} + \phi_{\nu_k} + \Lambda_k \cdot \vv{G_k} \right) \bPP_{k,k+1}^h \diff x_{[0, i-2]}\right) \right. \\
                      & \hphantom{= \log\left(\int \right.} \left. \exp\left(\frac{\Delta_{i-1,i}}{h} + \phi_{m_{i-1}} + \Lambda_{i-1} \cdot \vv{G_{i-1}} \right) \bPP_{i-1,i}^h \diff x_{i-1}\right)              \\
                      & = \log\left(\int \exp\left(\psi^{u}_{i-1} + \frac{\Delta_{i-1,i}}{h} + \phi_{m_{i-1}} + \Lambda_{i-1} \cdot \vv{G_{i-1}} \right)\bPP_{i-1,i}^h \diff x_{i-1}\right).
    \end{align*}
    A symmetric reasoning can be done for $\psi^d_i$.
\end{proof}

\section{Sinkhorn algorithm}
In order to numerically solve the dual problem, we propose to use an extension of the Sinkhorn algorithm. The Sinkhorn algorithm is a well-known algorithm to solve optimal transport problems. In order to do such numeric computations, we need to discretise our problem in space as well.

\subsection{Algorithm}
We first describe the principle of the algorithm.

We initialise the dual potentials $\phi_{\nu_k}$, $\phi_{b_k}$ and $\vv{\Lambda_k}$ as $0$ which corresponds to being equal to the reference neasure, and we then iteratively the following updates :
\begin{equation}
    \tag{SK1}
    \label{sinkhorn}
    \begin{cases}
        \phi_{\nu_k}^{n+1} = \arg\,\min_{\phi} \inf_{\psi} -M_k^\star(-\phi - \psi) - F_k^\star(\psi, -\phi_b)                                                                                              \\
        \hphantom{\phi_{\nu_k}^{n+1} = \arg\,\min_{\phi} } - h\expect_{\bPP_k^h}*{\exp(\psi^{u,n+1}_k + \phi + \vv{\Lambda^n_k} \cdot \vv{G_k} + \psi^{d,n}_k)}                                             \\
        \vv{\Lambda_k^{n+1}} = \arg\,\min_{\vv{\Lambda}} -C_k^\star(-\Lambda) - h\expect_{\bPP_k^h}*{\exp(\psi^{u,n+1}_k + \phi^{n+1}_{\nu_k} + \vv{\Lambda} \cdot \vv{G_k} + \psi^{d,n}_k)}                \\
        \phi_{b_k}^{n+1} = \arg\,\min_{\phi} \inf_{\psi} -M_k^\star(-\phi_{\nu_k} - \psi) - F_k^\star(\psi, -\phi)                                                                                          \\
        \hphantom{\phi_{p_k}^{n+1} = \arg\,\min_{\phi}} - h\expect_{\bPP_{k,k+1}^h}*{\exp(\psi^{u,n+1}_k + \phi_{\nu_k}^{n+1} + \vv{\Lambda^{n+1}_k} \cdot \vv{G_k} + \Delta_{i,i+1}(\phi) + \psi^{d,n}_k)} \\
    \end{cases}
\end{equation}
and for the last marginal :
\begin{equation*}
    \tag{SK2}
    \label{sinkhorn2}
    \begin{cases}
        \phi_{m_{N_T}}^{n+1} = \arg\,\min_{\phi} -M_{N_T}^\star(-h\phi)                                                                                                      \\
        \hphantom{\phi_{m_{N_T}}^{n+1} = \arg\,\min_{\phi} } - h\expect_{\bPP_{N_T}^h}*{\exp(\psi^{u,n+1}_{N_T} + \phi + \Lambda^n_{N_T} \cdot \vv{G_{N_T}})}                \\
        \Lambda_i^{n+1} = \arg\,\min_{\Lambda} -C_i^\star(-\Lambda) - h\expect_{\bPP_{N_T}^h}*{\exp(\psi^{u,n+1}_{N_T} + \phi^{n+1}_{m_{N_T}} + \Lambda \cdot \vv{G_{N_T}})} \\
    \end{cases}
\end{equation*}

In practice, before each step we compute every downward $\psi^d_k$, iteratively as described in the previous section.
The pseudocode algorithm is described in Algorithm \ref{algo:psiuppsidown}.

\begin{algorithm}[H]
    \label{algo:psiuppsidown}
    \caption{Function Definitions for UpdatePsiUp and UpdatePsiDown}
    \SetAlgoNlRelativeSize{0}
    \SetAlgoNlRelativeSize{-2}
    \SetNlSty{textbf}{(}{)}
    \SetAlgoNlRelativeSize{-2}
    \SetNlSty{}{}{}
    \SetAlgoNlRelativeSize{-4}
    \SetNlSty{}{}{}

    \textbf{Function} UpdatePsiUp(\( \psi^{u}_{k+1}, \phi^{n}_{\nu_k}, \Lambda^n_k, \bPP_{k, k+1}^h, x_k, x_{k+1}, \phi_{b_k}, h \)) \\
    \Begin{
        \textbf{Compute} \( \Delta_{k, k+1} = (x_{k+1} - x_k) \phi_{b_k}(x_k) \) \\
        \Return \( \log\left(\int \exp\left(\psi^{u}_{k+1} + \frac{\Delta_{k,k+1}}{h} + \phi^{n}_{\nu_k} + \Lambda^n_k \cdot \vv{G_k} \right) \bPP^h_{k,k+1} \diff x_{k-1}\right) \)
    }

    \textbf{Function} UpdatePsiDown(\( \psi^{d}_{k-1}, \phi^{n}_{\nu_k}, \Lambda^n_k, \bPP^h_{k-1, k}, x_k, x_{k-1}, \phi_{b_k}, h \)) \\
    \Begin{
        \textbf{Compute} \( \Delta_{k-1, k} = (x_k - x_{k-1}) \phi_{b_k}(x_k) \) \\
        \Return \( \log\left(\int \exp\left(\psi^{d}_{k-1} + \frac{\Delta_{k-1,k}}{h} + \phi^{n}_{m_{k}} + \Lambda^n_{k} \cdot \vv{G_{k}} \right) \bPP^h_{k-1,k} \diff x_{k}\right) \)
    }
\end{algorithm}

We describe the pseudo-code of the Multi-Marginal Sinkhorn algorithm in Algorithm \ref{algo:sinkhorn}.

\begin{algorithm}[H]
    \caption{Sinkhorn algorithm for problem \ref{frdual}}
    \label{algo:sinkhorn}
    \KwIn{Number of timesteps $N_T$, support of each space $\Xc_k^{\diff x}$}
    \KwIn{Stopping tolerance $\epsilon$, reference measure $\bPP^h$}
    \KwIn{Initial potentials $\phi_{\nu_k}^0$, $\phi_{b_k}^0$, $\Lambda_i^0$}
    \KwResult{Numerical solution of problem \ref{frdual}}
    $\psi^{u,0}_0 \gets \log \rho_0$\;
    $\psi^{d,0}_{N_T} \gets 0$\;
    \For{$n \gets 0$ \KwTo $N$}{
        \For{$k \gets N_T-1$ \KwTo $0$}{
            $\psi^{d,n}_k \gets$ UpdatePsiDown($\psi^{d,n}_k$, $\phi_{\nu_k}^{n}$, $\phi_{b_k}^{n}$, $\Lambda^n_i$, $\bPP^h$)\;
        }
        \For{$k \gets 0$ \KwTo $N_T - 1$}{
            $\phi_{\nu_k}^{n+1} \gets$ SolveMarginal($\phi_{\nu_k}^{n}$, $\phi_{b_k}^{n}$, $\Lambda^n_i$, $\psi^{u,n}_k$, $\psi^{d,n}_k$, $\bPP^h$)\;
            $\Lambda_k^{n+1} \gets$ SolvePrices($\phi_{\nu_k}^{n}$, $\phi_{b_k}^{n}$, $\Lambda^n_i$, $\psi^{u,n}_k$, $\psi^{d,n}_k$, $\bPP^h$)\;
            $\phi_{b_k}^{n+1} \gets$ SolveDriftVol($\phi_{\nu_k}^{n+1}$, $\phi_{b_k}^{n}$, $\Lambda^{n+1}_i$, $\psi^{u,n}_k$, $\psi^{d,n}_k$, $\bPP^h$)\;
            $\psi^{u,n+1}_{k+1} \gets$ UpdatePsiUp($\psi^{u,n+1}_{k+1}$, $\phi_{\nu_k}^{n+1}$, $\phi_{b_k}^{n+1}$, $\Lambda^{n+1}_i$, $\bPP^h$)\;
        }
        $\phi_{m_{N_T}}^{n+1} \gets$ SolveMarginal($\phi_{m_{N_T}}^{n}$, $\Lambda^{n}_i$, $\psi^{u,n}_{N_T}$, $\psi^{d,n}_{N_T}$, $\bPP^h$)\;
        $\Lambda_{N_T}^{n+1} \gets$ SolvePrices($\phi_{m_{N_T}}^{n+1}$, $\Lambda^{n}_i$, $\psi^{u,n}_{N_T}$, $\psi^{d,n}_{N_T}$, $\bPP^h$)\;
        $e_{\text{max}} \gets \frac{\|\Phi^{n+1} - \Phi^n\|_\infty}{\|\Phi^n\|_\infty}$\;
        \If{$e_{\text{max}} < \epsilon$}{
            \Return $\Phi$, $\Psi$\;
        }
    }
    \Return $\Phi$, $\Psi$\;
\end{algorithm}
The functions SolveMarginal, SolvePrices and SolveDriftVol are functions that solve the minimization problems in \eqref{sinkhorn} and \eqref{sinkhorn2},
and might have different implementations depending on the structure of the problem.

In order to provide a numerical implementation of the method, we provide multiple details in the next sections.

\subsection{Truncation in space}
\label{truncate}
First, we need to truncate the support so that it has a finite width. We take advantage of the fact that our reference measure $\PP^h$ is a diffusion measure and that its density tends quickly to $0$ as we move away from the mean.
We want to create an interval for each marginal $\nu_k$ that contains most of the mass.
Because the marginals are not known beforehand, we instead propose to truncate on $\overline{\nu_k}$, the marginals of the reference measure $\bPP^h$.

First, in the case of a reference measure with constant drift and volatility, for each timestep $t_k =  k\, h \quad i = 0.,\ldots,N_T$,  we restrict the computational domain to
\[
    \overline{\Xc_k} = \left[m_k - \delta \,  v_k ,\, m_k +  \delta \, v_k  \right]
\]
where :
\begin{itemize}
    \item[-] $m_k = m_0 + h \, \sum_{l = 0}^{k} \overline{\mu_l}$ is the mean of the $k$-th marginal, where $m_0$ is the initial mean of the reference measure.
    \item[-] $\delta v_i$ is a multiple of the standard deviation of the reference measure: $v_k = \sqrt{v_0^2 + h \sum_{l = 0}^{k} \overline{\sigma_l}^2}$ where $v_0$ is the standard deviation of the initial marginal of the reference measure. For a sufficiently large $\delta$ (in general $\delta =5$)
        we expect $\PP^h_k$ the solution to be negligible outside of $\Xc_k$, i.e.
        the mass transported by the drift (small as we are solving
        soft martingale constraint problems) and the  diffusion further than the
        enlarged domain is negligible.
\end{itemize}

When the reference measure has a non-constant drift and volatility, we can use the same truncation as above, but with the maximum in space of the drift and volatility.

\subsection{Discretisation}

On this compact supports, we discretize the potentials on a regular grid.
We can hence represent the potentials $\phi_{\nu_k}$, $\phi_{b_k}$, as vectors $\phi_{b_k}^{\diff x}$, of size $N_{\Xc_k}$ and $N_{\Xc_k}\times d$ respectively. We can also represent the quantities $\psi^u_k$, $\psi^d_k$ as vectors $\psi^{u,\diff x}_k$, $\psi^{d,\diff x}_k$ of size $N_{\Xc_k}$.
The integrals can be replaced by discrete sums over the grid $\Xc_k^{\diff x}$.
For clarity, we will drop the superscript $\diff x$ in the following and keep the notations with integrals as they can be used interchangeably.

An important parameter is the space discretisation step $\diff x$. We want to choose it
so that the transition law $\bPP_{i,i+1}$ is non-zero on enough points.
To ensure this, we want :
\[
    \diff x < K \bs_i \sqrt{h}
\]
which links $N_{\Xc_k}$ to $N_T$, for a constant $K$ to be determined which ensure the minimum of points in the width of the kernel.

\subsection{Multiscale Strategy}
\label{sec:multiscale}
Because the complexity of algorithm \ref{algo:sinkhorn} scales with
the number of timesteps $N_T$, it can be interesting to start with a coarse time discretisation and to refine it iteratively.

One way to do so is presented in \cite{benamouEntropicMartingaleTransport} and consists in interpolating
the potentials $\phi_{b_k}$ and using them as initialisation for the next level of discretisation.

Here, we propose an alternative method, which consists in using the result of optimization at a coarse scale as the reference measure for a finer scale. This is done as such :
\begin{enumerate}
    \item Solve the problem at a scale $h = \frac{T}{N_T - 1}$ with $N_T$ timesteps. \label{step1}
    \item Interpolate the obtained volatilities $\sigma_k^2(x, t)$ for a new scale $h' = \frac{T}{N_T' - 1}$ with $N_T' > N_T$ timesteps. Multiple interpolation technique can be used, in our case, we use Unbalanced Optimal Transport Barycenters. We then obtain a new reference measure $\bPP^{h'}$.
    \item $h = h'$, $N_T = N_T'$, and return at step \ref{step1} if the desired scale is not reached.
\end{enumerate}

We can, for instance, start with $N_T$ equal to the number of calibration steps.

\subsection{Anderson Acceleration}
Entropic Sinkhorn iterations are known to converge slowly when the regularization parameter $\epsilon \rightarrow 0$.
Since one Sinkhorn iteration $\Phi^{k+1} = s(\Phi^k)$ is a fixed-point iteration, in order to accelerate convergence, we propose to use Anderson acceleration \cite{anderson1965} \cite{anderson2011}.
We consider our variable $\Phi$ as a variable of dimension $N_\Phi$ and denote $g(\Phi) = s(\Phi) - \Phi$ is the residual of the Sinkhorn iteration.
In particular, given a vector of $m_k$ iterates $\Phi^k_{m_k} = (\Phi^{k-m_k+1}, \dots, \Phi^{k})$, we denote $G = \{g(\Phi^k)\}_k = \{s(\Phi^k) - \Phi^k\}_k$ the vector of their residuals.
We compute the next iterate $\Phi^{k+1}$ as a combination of the $m_k$ previous iterates as a solution to the following problem :
\begin{equation}
    \min_{\alpha \in \Delta(m_k)} ||G\alpha||_2
    \tag{AA}
    \label{eq:anderson}
\end{equation}
where $\Delta(m_k) = \{v \in \R^{m_k}, \sum v_i = 1\}$. The idea is that by first-order approximation, $||g(\Phi^k_{m_k} \alpha)||_2 \approx ||G \alpha||_2$.
The new iterate is then given by $\Phi^{k+1} = s(\Phi^k_{m_k})\alpha \approx s(\Phi^k_{m_k}\alpha)$ by first order approximation.

The problem (\ref{eq:anderson}) is reformulated with the following linear least square problem as suggested in \cite{anderson1965} \cite{anderson2011}:
\begin{equation}
    \min_{\gamma \in \R^{m_k-1}} ||\Gc\gamma - g_k||_2
    \tag{AA-LS}
    \label{eq:anderson_ls}
\end{equation}
where $\Gc = \begin{pmatrix} g_k - g_{k-1} & \dots & g_{k-m_k+1} - g_{k-m_k} \end{pmatrix} \in \R^{N_\Phi \times (m_k-1)}$ with $g_k = g({\Phi^k}) \in \R^{N_{\Phi}}$.
This provides a solution to (\ref{eq:anderson}) as : $\alpha = \begin{pmatrix} 1 - \gamma_{m_k-1} & \gamma_{m_k-1} - \gamma_{m_k-2} & \dots & \gamma_2 - \gamma_1 & \gamma_1 \end{pmatrix}$.

We can then rewrite the iterate $\Phi^{k+1}$ as :
\begin{equation*}
    \Phi^{k+1} = s(\Phi^k_{m_k})\alpha = \sum_{i=k-m_k}^{k} \alpha_i s(\Phi^{i}) = s(\Phi^k) + \sum_{i=k-m_k}^{k-1} \gamma_i (s(\Phi^{i+1}) - s(\Phi^i))
\end{equation*}
Noting $d\Phi^k = \begin{pmatrix} \Phi^k - \Phi^{k-1} & \dots & \Phi^{k-m_k+1} - \Phi^{k-m_k} \end{pmatrix} \in \R^{N_\Phi \times (m_k-1)}$ the matrix of iterate differences, we can rewrite the iterate $\Phi^{k+1}$ as :
\begin{align}
    \Phi^{k+1} & = s(\Phi^k) + d\Phi^k \gamma \nonumber                                   \\
               & = \Phi^k + g_k - (\Gc + d\Phi^k) \gamma \tag{AA-iter} \label{eq:aa-iter}
\end{align}

The problem (\ref{eq:anderson_ls}) is a linear least square problem that can be solved efficiently. We solve it by solving the linear system
$\Gc^T \Gc \gamma = \Gc^T g_k$ since the Gram matrix $\Gc^T \Gc$ can be easily computed.
To avoid cases where $\Gc^T \Gc$ is ill-conditioned, we add a ridge regularization term $\epsilon I$ to the Gram matrix, where $\epsilon$ is a small positive constant. When $\epsilon \rightarrow 0$,
the solution of the linear system converges to the solution of (\ref{eq:anderson_ls}), while when $\epsilon \rightarrow \infty$, the solution of the linear system converges to $0$ and hence the steps become simple Sinkhorn iterations.

To avoid convergence problems, we also employ a safeguarding mechanism as described, for instance, in \cite{garstka2022safeguarded} to enhance the stability and performance of the Anderson acceleration technique. At each iteration \( k \), an accelerated candidate \( \Phi^{k, \text{acc}} \) is generated using iteration (\ref{eq:aa-iter}). To assess the quality of this candidate, we compare its residual \( g(\Phi^{k, \text{acc}}) \) with the residual of the last accepted iterate \( g(\Phi^{k}) \). Specifically, we impose a safeguarding condition:
\begin{equation}
    \|  g(\Phi^{k+1, \text{acc}}) \|_2 \leq \tau \| g(\Phi^{k}) \|_2
\end{equation}
Here, \( \tau \) is a tolerance parameter. If the condition is met, the accelerated candidate \( \Phi^{k, \text{acc}} \) is accepted for the next iteration. Otherwise, the candidate is declined, and the algorithm proceeds without acceleration for that step.

Finally, similarly to what is done for Sinkhorn algorithm, we stop the algorithm when the $l^\infty$ norm of the residual is below a given tolerance $\epsilon_{\text{stop}}$.

\section{Application: Local volatility calibration}
We want to minimize over martingale positive process, hence we are interested
in the process of the form $X_t = \log S_t$ where $S_t$ is a martingale diffusion
process:
\begin{equation*}
    \frac{\diff S_t}{S_t} = \sigma(S_t, t), {\diff \RR}_t.
\end{equation*}
Applying Ito's lemma, in terms of $X_t = \log S_t$, we obtain the following SDE:
\begin{equation*}
    \diff X_t = -\frac{1}{2} \sigma^2(X_t, t) \, \diff t + \sigma(X_t, t) \, {\diff \RR}_t.
\end{equation*}
As noted in article \cite{guo2021optimal}, in terms of the characteristics of the SDE $\mu$ and $\sigma^2$, this reads:
\[
    2 \mu (x, kh) = -\sigma^2(x, kh),
\]
Using our defined variables $\beta_k$ and $\alpha_k$ in Section \ref{sec:prelim} equations (\ref{beta_def}) (\ref{alpha_def}), this can be written as
\[
    2 \beta_k = -\alpha_k.
\]
Notice however that both equations are not exactly equivalent due to the first-order term $\mu^2 \, h$ appearing in equation $\eqref{alpha_def}$.

Instead of the variables $\beta$ and $\alpha$, we will use the variable $b_k$ corresponding to the choice $B(X, Y) = 1 - e^{Y - X}$. We compute $b_k$ :
\begin{align*}
    b_k(x) & = \frac{1}{h} \expect*{1 - e^{X_{k+1} - X_k}|X_k=x}                    \\
           & = \frac{1}{h} \frac{1}{e^{X_k}}\expect*{e^{X_k} - e^{X_{k+1}}|X_k = x} \\
           & = \frac{1}{h} \frac{1}{S_k}\expect*{S_k - S_{k+1}|X_k = x},
\end{align*}
which directly corresponds to an at  martingale constraint on the process $S_t$. This is the moment we will consider in this application and the above strategy method is easily applied. \\

We penalize this variable with $\Fc$ a soft penalization $F = c_{\text{mart}} \| \cdot \|_{L^2}$ with a constant penalization parameter $c_\text{mart}$.
This is to overcome the convex ordering problem mentioned in \cite{alfonsiSamplingProbabilityMeasures2017} when working on a discretized grid:
the discrete approximation on a grid of continuous measures in convex order might not be in convex order.

For the price constraints, let $c_i \in \R^+$ be an observed price, we use the soft constraint $C_i$ a convex function with minima in $c_i$, for instance, $C_i = \frac{1}{2}(\cdot - c_i)^2$.
We use the payoff function $g_i(x) = \max(0, e^x - K_i)$ for a call option with strike $K_i$, and $g_i(x) = \max(0, K_i - e^x)$ for a put option with strike $K_i$.

For the first marginal constraint, we propose using a hard constraint $M_0 = \iota_{\mu_0}$ whose dual is $\SP{\phi_{\nu_0}}{\mu_0}{}$, with $\mu_0 = \delta_{\log S_0}$. \\

Finally, we obtain the following problem:
\begin{align*}
    \mathcal{V} = \inf_{\PP^h} & \sum h \expect_{\nu_k}*{F(b_k)} + h \KL(\PP^h|\bPP^h)                 \\
                               & + \iota_{\mu_0}\left(\nu_0\right) + \sum_{i=1}^n C_i\left(g_i\right),
\end{align*}
and in its dual form :
\begin{align*}
    \mathcal{D} = \sup_{\phi_{\nu_0}, \phi_{b_k}, \lambda_{g_i}} J(\phi) \eqdef & \expect_{\mu_0}*{\phi_{\nu_0}} + \sum_{i=0}^{N_C} C_i^\star(\lambda_{g_i})           \\
                                                                                & + h \expect_{\bPP^h}*{\exp\left(\frac{\Delta(\phi_m, \phi_b, \lambda_g)}{h}\right)},
\end{align*}
under the constraints $\phi_{\nu_k} = hH^\star(-\phi_{b_k}), \, \forall k \notin \{0, N_T\}$, $\phi_{m_{N_T}} = 0$.

The first equation of the Sinkhorn system (\ref{sinkhorn}) becomes :
\begin{align*}
    \mu_0 & = \int \PP^h(x_0, \diff x_{-0})                              \\
          & = \exp(\psi^u_0(x_0) + \phi_{\nu_0}(x_0)/h + \psi^d_0(x_0)),
\end{align*}
which can be solved using classical Sinkhorn iteration as :
\begin{equation*}
    \phi_{\nu_0} = -h (\psi^u_0 + \psi^d_0 - \log(\mu_0)).
\end{equation*}

We derivate our functional $J$ with respect to $\phi_{b_k}$ to obtain the following optimality equation :
\begin{equation*}
    0 = \expect_{\PP^h_{k,k+1}}*{(1 - e^{X_{k+1} - X_{k}}) + hH^{\star'}(-\phi_{\nu_k}(x_k))}
\end{equation*}
The expression of the Hessian is simple as well, as it is diagonal.

In a similar fashion, we can obtain optimality equations for $\Lambda_i$ :
\begin{equation*}
    0 = -C_i^{\star'}(-\lambda_i) + \E_{\PP_k^h}[g^{\PP^h}_i], \, \forall i \in \{0, N_T-1\}
\end{equation*}
We can compute the Hessian which is this time of size $|Ic_k|$.

To optimize on these two variables, we can perform a Newton method.

The calculation of the derivatives on $\phi_{b_k}$ is of complexity $O(N_{\Xc_k} \times N_{\Xc_{k+1}})$, while the other
derivatives are of complexity $O(N_{\Xc_k})$. The computation of $\psi^u$ and $\psi^d$ are of complexity $O(N_{\Xc_k} \times N_{\Xc_{k+1}})$.
Hence, the complexity of the whole algorithm is of complexity $O(N_T \times \max N_{\Xc_k}^2)$.

At the coarsest scale, the reference measure is chosen to match the ATM price at the calibration time.
Hence, given the ATM volatility $\sigma_{\text{BS}}(F, \tau_i)$ for all calibration time $(\tau_i)$, where $F$ is the forward price,
we set our reference volatility to be $\overline{\sigma}^2_0(x) = \sigma_{\text{BS}}^2(F, \tau_0) \tau_0$ and
$\overline{\sigma}^2_{i}(x) = \sigma_{\text{BS}}^2(F, \tau_i) \tau_i - \overline{\sigma}^2_{i-1}(x)$ for all $i \in \{1, \dots, \}$.

\subsection{Numerical results}

Here, we first generate prices using a parametric local volatility surface.
The local volatility surface that we choose is the SSVI surface as presented in \cite{Gatheral_Jacquier_2012}.
We choose the at-the-money implied total variance for the money to be $\theta_t = 0.04t$. We choose a power-law parameterization of the function $\phi$
described in \cite{Gatheral_Jacquier_2012} as $\phi(\theta) = \eta \theta^{-\lambda}$. The at-the-money total implied variance is then
\[
    \sigma_{\text{BS}}^2(k, T) = \frac{\theta_t}{2} \left(1 + \rho \phi(\theta_t) k + \sqrt{(\phi(\theta_t) k + \rho)^2 + (1 - \rho^2)}\right),
\]
where $k$ is the log-moneyness $\log(K/F)$. The parameters are chosen as $\eta = 1.6$, $\lambda = 0.4$ and $\rho = -0.15$.
The resulting surface is shown in Figure \ref{fig:expesvi_simp}.
We produce prices using the Black-Scholes formula.

We select five times in which we will calibrate the model on generated prices : $t \in \{0.2, 0.4, 0.6, 0.8, 1.0\}$.
On time $\tau_i$, we select the calls with strikes $K \in \{S_0 + 1 + 4k_i\}$ and the puts with strikes $K \in \{S_0 - 1 - 4k_i\}$ for $k_i \in \{0, 1, \dots, N_{C,i}\}$, with $N_{C} = (5, 7, 9, 10, 12)$. We calibrate
less points for the earlier maturities as mass is almost nonexistent far from the at-the-money price at these maturities.

We perform the multiscale strategy described in Section \ref{sec:multiscale} with up to $N_T = 81$.

We choose the constant $c_{\text{mart}} = 1\times 10^4$ as the penalization term for the martingale constraint.

We implement the algorithm in Python using the PyKeops library.

The program runs in approximately 10 minutes on a V100 GPU with 24GB of GDDR5 memory and an Intel Xeon 5217 8 core CPU with 192GB or DDR4 RAM.

We show the convergence curves at the last scale $N_T=81$ in Figure \ref{fig:expe2_conv}.  Figure \ref{fig:expesvi_itererr} shows the
$L^2$ norm of the relative iterate errors $\frac{\|\Phi^{k+1} - \Phi^k\|}{\|\Phi^k\|}$ at each iteration.
 Figure \ref{fig:expesvi_marterr} shows the $L^2$ norm of the martingale error at each iteration.
 Figure \ref{fig:expesvi_priceerr} shows the $L^2$ norm of the price errors $\sum_i \|c_i - \E(G(X_{\tau_i}))\|^2$ by iteration for every scale. Different scales are separated by a dashed line.

Finally, Figure \ref{fig:expe2_cal} shows the calibration results at each time. For each of the calibration times,
we show the reference implied volatility, the calibrated implied volatility, and the implied volatility generated by
the forward diffusion process with the same number of timesteps and the volatility of the solution.
\begin{figure}[ht]
    \centering
    \includegraphics[width=0.7\textwidth]{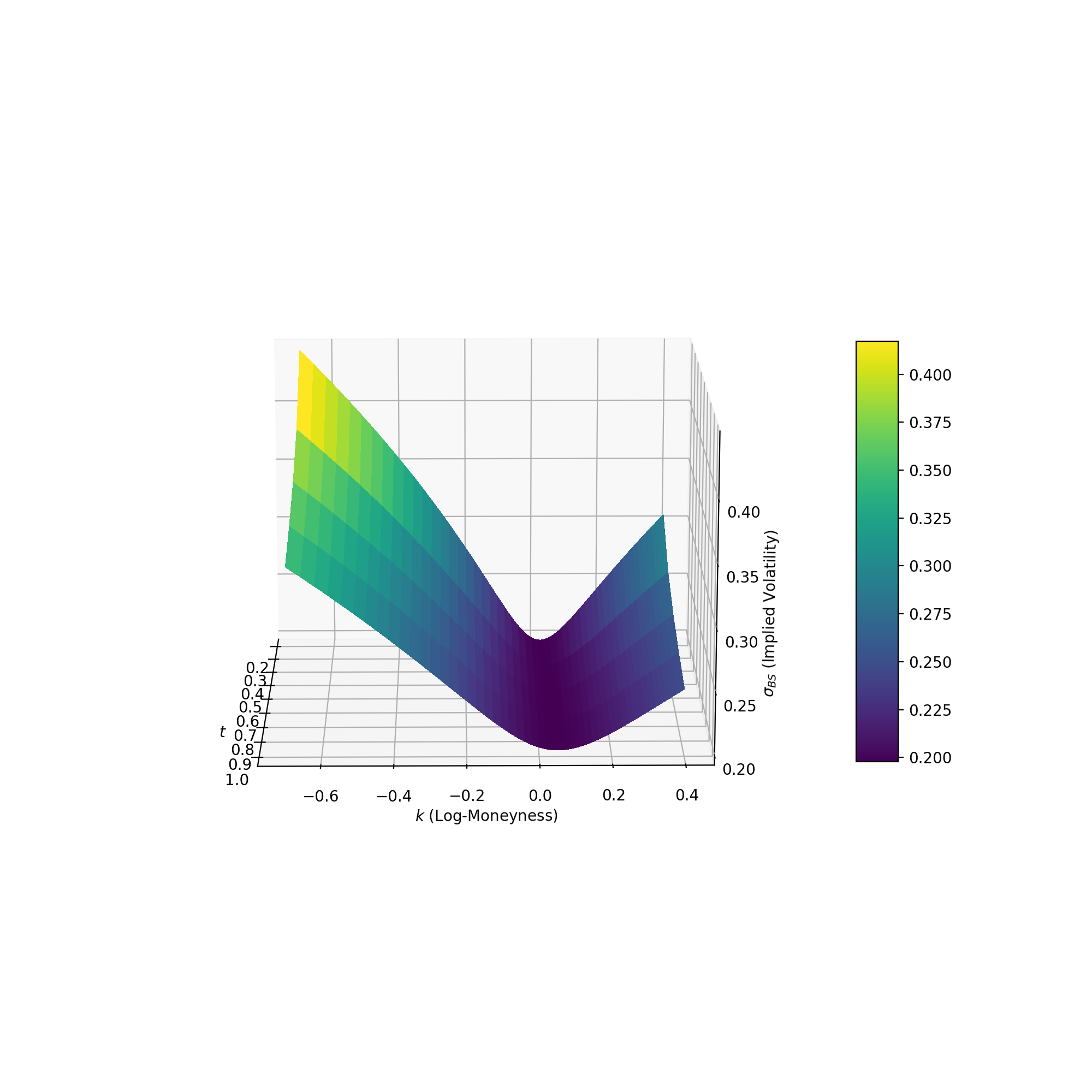}
    \caption{Generating model implied volatility}
    \label{fig:expesvi_simp}
\end{figure}

\begin{figure}[ht]
    \centering
    \begin{subfigure}[t]{0.45\textwidth}
        \includegraphics[width=\textwidth]{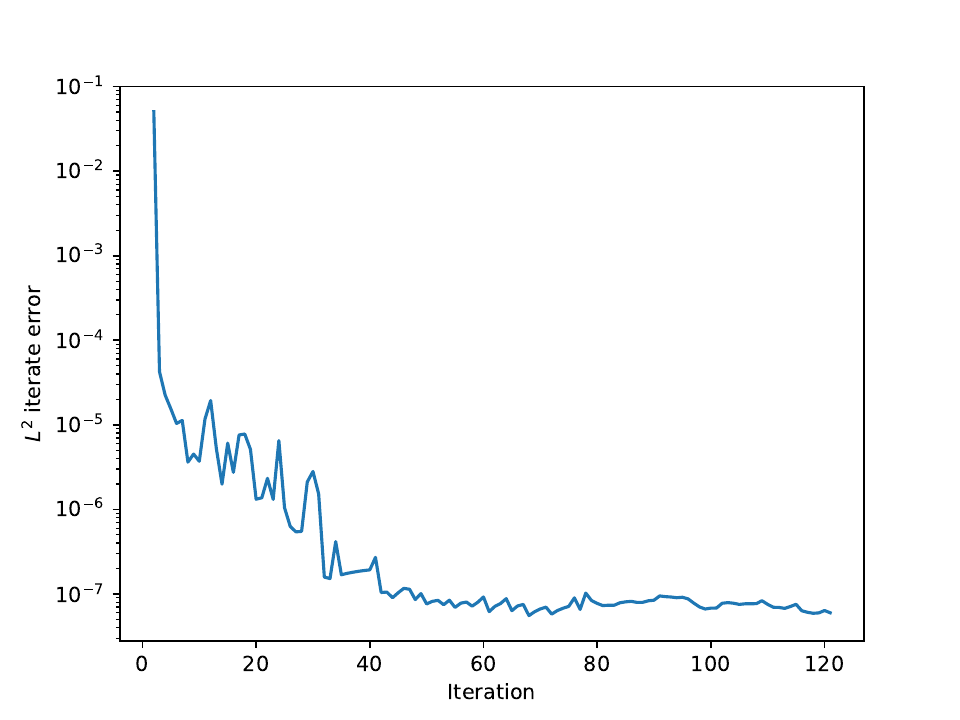}
        \caption{$L^2$ norm of iterate errors}
        \label{fig:expesvi_itererr}
    \end{subfigure}
    \begin{subfigure}[t]{0.45\textwidth}
        \includegraphics[width=\textwidth]{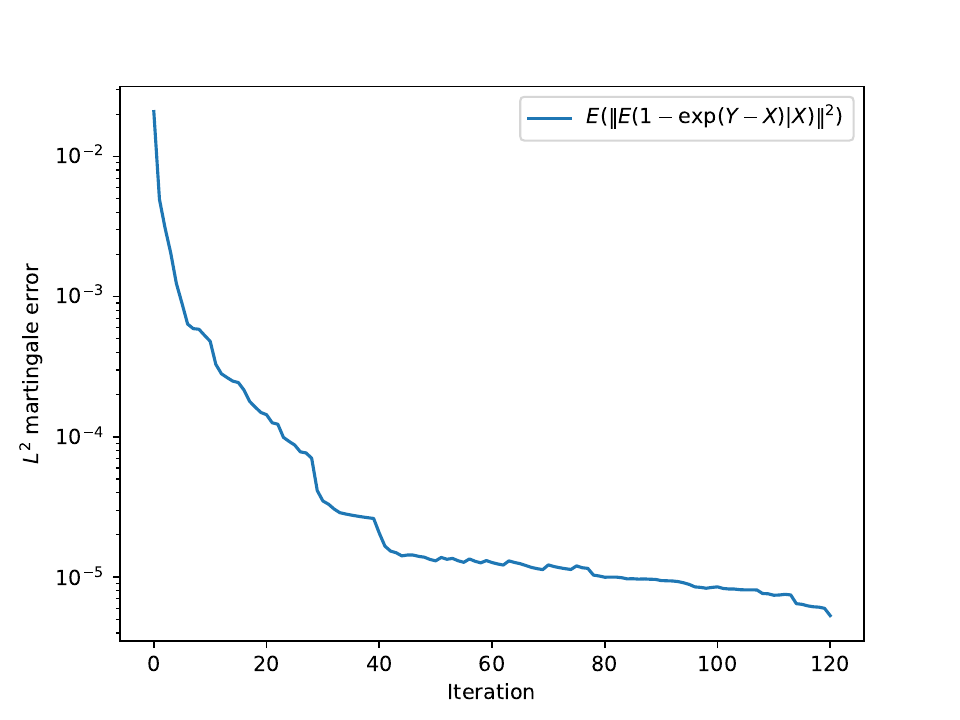}
        \caption{$L^2$ norm of the martingale error}
        \label{fig:expesvi_marterr}
    \end{subfigure}
    \begin{subfigure}[t]{0.45\textwidth}
        \includegraphics[width=\textwidth]{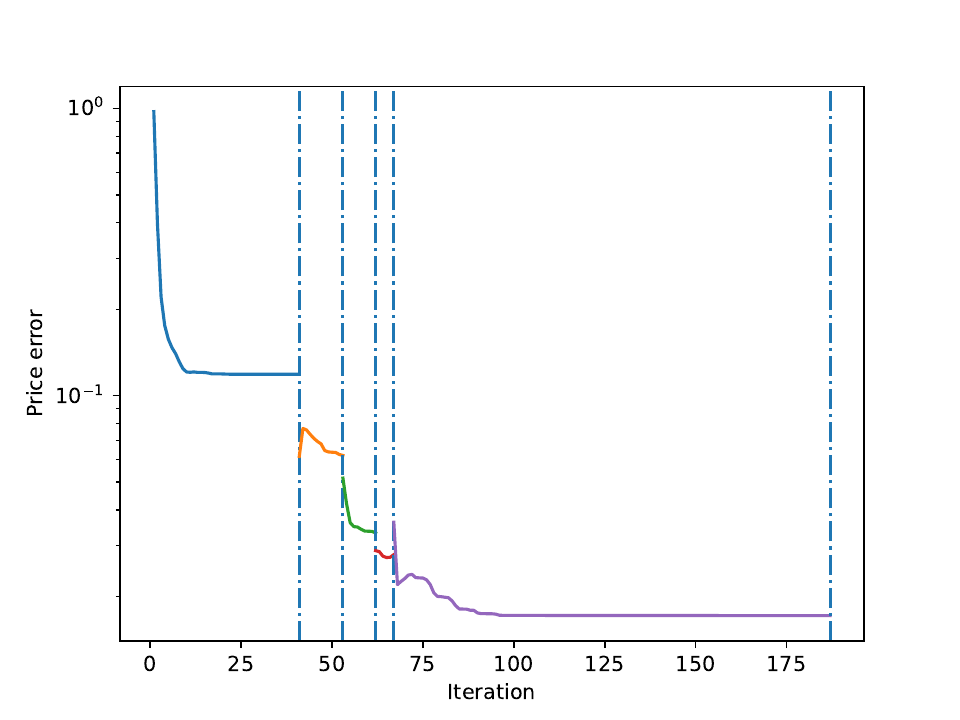}
        \caption{$L^2$ of the price errors}
        \label{fig:expesvi_priceerr}
    \end{subfigure}
    \caption{Convergence curves}
    \label{fig:expe2_conv}
\end{figure}

\begin{figure}[ht]
    \centering
    \begin{subfigure}[t]{0.45\textwidth}
        \includegraphics[width=\textwidth]{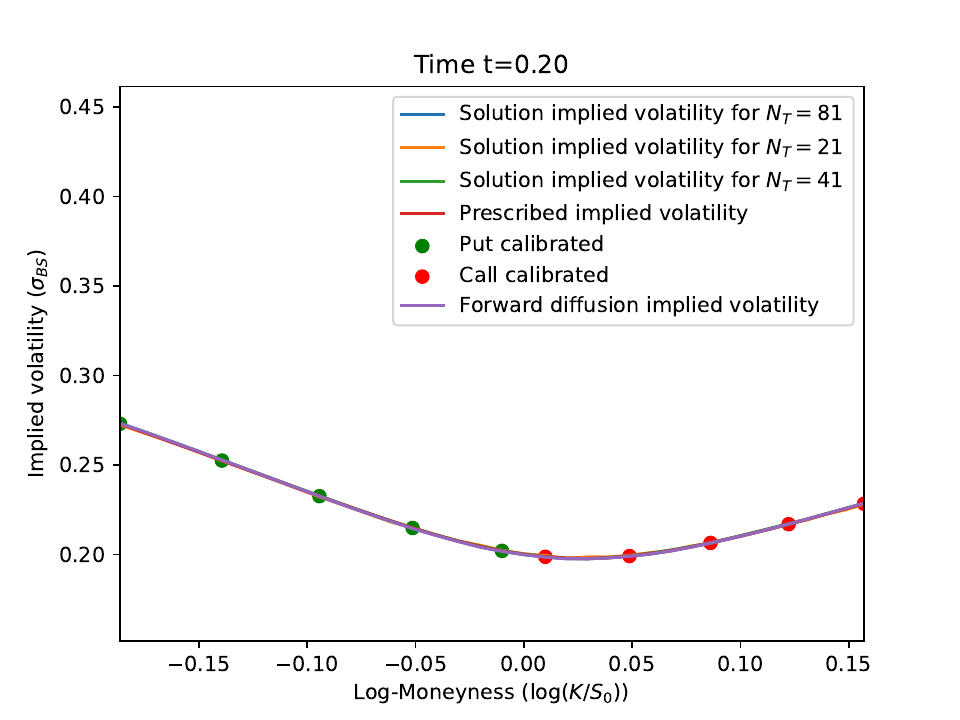}
        \caption{Calibration at time 0.2.}
        \label{fig:expesvi_cal02}
    \end{subfigure}
    \begin{subfigure}[t]{0.45\textwidth}
        \includegraphics[width=\textwidth]{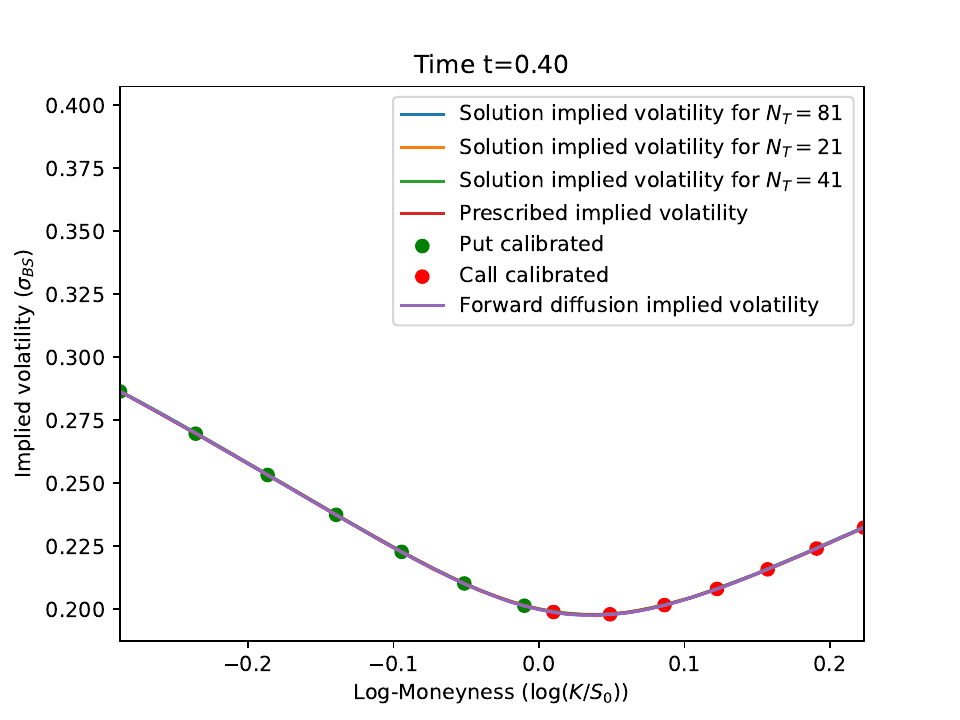}
        \caption{Calibration at time 0.4.}
        \label{fig:expesvi_cal04}
    \end{subfigure}
    \begin{subfigure}[t]{0.45\textwidth}
        \includegraphics[width=\textwidth]{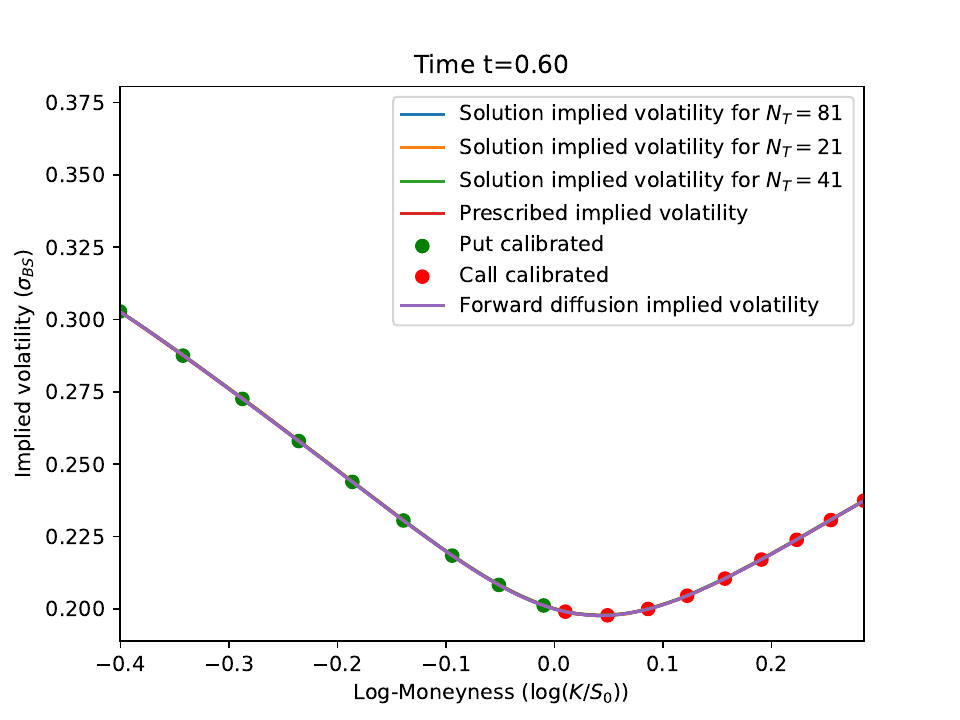}
        \caption{Calibration at time 0.6.}
        \label{fig:expesvi_cal06}
    \end{subfigure}
    \begin{subfigure}[t]{0.45\textwidth}
        \includegraphics[width=\textwidth]{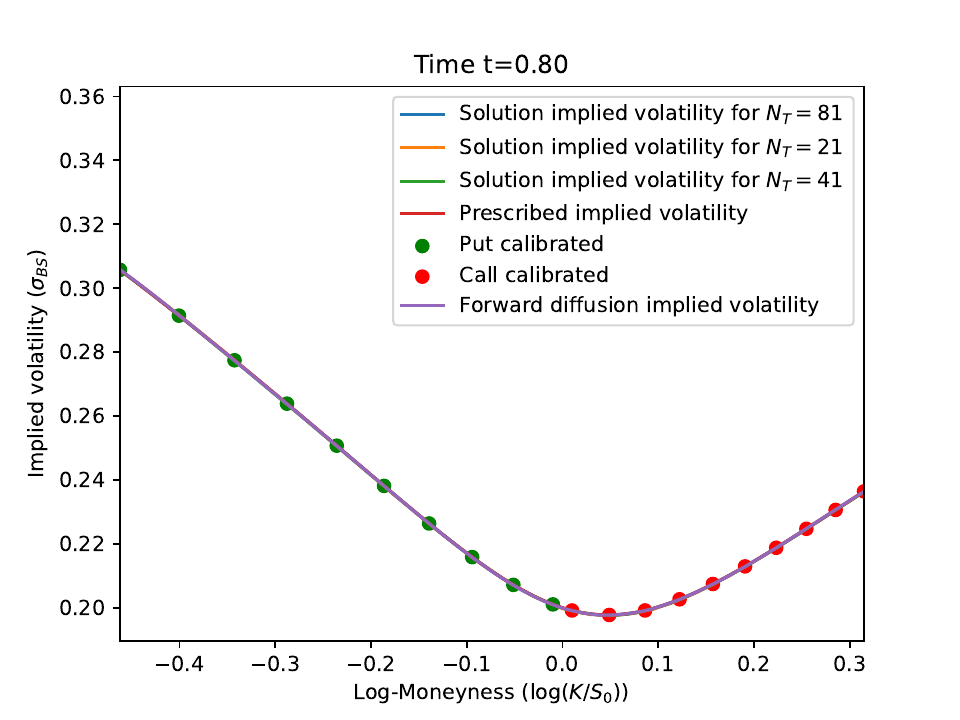}
        \caption{Calibration at time 0.8.}
        \label{fig:expesvi_cal08}
    \end{subfigure}
    \begin{subfigure}[t]{0.45\textwidth}
        \includegraphics[width=\textwidth]{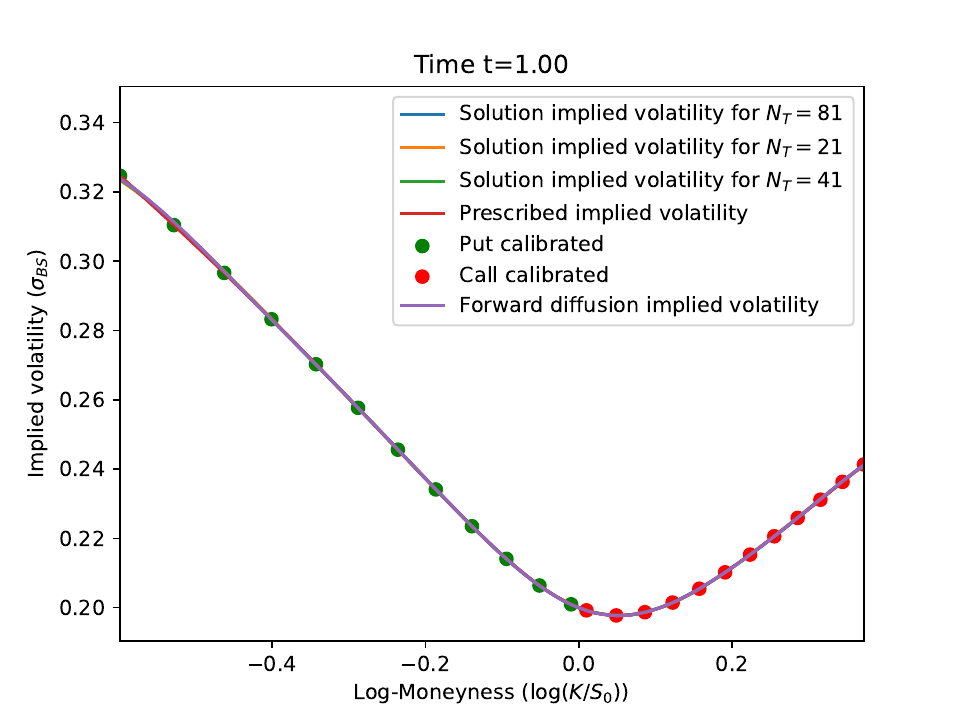}
        \caption{Calibration at time 1.}
        \label{fig:expesvi_cal10}
    \end{subfigure}
    \caption{Calibration results.}
    \label{fig:expe2_cal}
\end{figure}

\printbibliography

\appendix

\subsection{Proof of Proposition \ref{prop_markovian}}
\label{proof_propmarkov}
First, we separate the sum of $\lambda_{g_i}$ per timesteps using the values defined above :
\begin{equation*}
    \sum_{i=0}^{N_C} \lambda_{g_i} G_i(x_i) = \sum_{k=0}^{N_T} \sum_{i=0}^{N_C} \lambda_{g_i} \mathds{1}_{\tau_i = k} G_i(x_i) = \sum_{k=0}^{N_T} \vv{\Lambda}_k \cdot \vv{G_k}(x_k)
\end{equation*}

We can rewrite the operator $\Delta$ as a sum :
\begin{align*}
    \Delta(\phi_{m}, \phi_{b}, \lambda_g) = & \sum_{k=0}^{N_T - 1} \Delta_{k,k+1}(x_k, x_{k+1}) + \vv{\Lambda_k} \cdot \vv{G_k} + \phi_{m_k}(x_k) \\
                                            & + \phi_{m_{N_T}}(x_{N_T}) + \vv{\Lambda_{N_T}} \cdot \vv{G_{N_T}}(x_{N_T})
\end{align*}
where only consecutive timesteps are grouped together. In particular, for a given $k$, we can separate this sum into three parts :
\begin{align*}
    \Delta(\phi_{m}, \phi_{p}, \phi_{d}, \lambda_g) = & \, \Delta_{k,k+1}(x_k, x_{k+1}) + \vv{\Lambda_k} \cdot \vv{G_k}(x_k) + \phi_{m_k}(x_k) \\
                                                      & + \vv{\Lambda_{k+1}} \cdot \vv{G_{k+1}}(x_{k+1}) + \phi_{m_{k+1}}(x_{k+1})             \\
                                                      & + \Delta^u_k(x_k) + \Delta^d_{k+1}(x_{k+1})
\end{align*}
where $\Delta^u_k$ and $\Delta^d_k$ are given by :
\begin{align*}
    \Delta^u_k(x_k) & = \sum_{i=0}^{k-1} \Delta_{i,i+1}(x_i, x_{i+1}) + \vv{\Lambda_i} \cdot \vv{G_i}(x_i) + \phi_{m_i}(x_i)                    \\
    \Delta^d_k(x_k) & = \sum_{i=k}^{N_T-1} \Delta_{i,i+1}(x_i, x_{i+1}) + \vv{\Lambda_{i+1}} \cdot \vv{G_{i+1}}(x_i) + \phi_{m_{i+1}}(x_{i+1}).
\end{align*}
We further note :
\begin{align*}
    \overline{\Delta}_{k, k+1}(x_k, x_{k+1}) = & \, \Delta_{k,k+1}(x_k, x_{k+1}) + \vv{\Lambda_k} \cdot \vv{G_k}(x_k) + \phi_{m_k}(x_k) \\
                                               & + \vv{\Lambda_{k+1}} \cdot \vv{G_{k+1}}(x_{k+1}) + \phi_{m_{k+1}}(x_{k+1})             \\
\end{align*}
for simplicity.

Given that $\bPP^h$ is separable in the same fashion, we can compute the joint probability between steps $k$ and $k+1$ as :
\begin{align*}
    \PP^{h,\star}_{k, k+1}(x_k, x_{k+1}) & = \int \PP^{h,\star}(dx_{[0,k-1]}, x_k, x_{k+1}, dx_{[k+2, N_T]})                                                                              \\
                                         & = \int e^{(\Delta^u_k + \overline{\Delta}_{k, k+1} + \Delta^d_{k+1})/h} \rho_0 \prod_{i=0}^{N_T} \bPP_{i, i+1}^h dx_{[0, k-1]} dx_{[k+2, N_T]} \\
                                         & = \left(\int e^{\Delta^u_k/h} \rho_0 \prod_{i=0}^{k-1} \bPP_{i, i+1}^h dx_{[0, k-1]}\right)                                                    \\
                                         & \hphantom{=} \times e^{\overline{\Delta}_{k, k+1}/h} \bPP^h_{k, k+1}                                                                           \\
                                         & \hphantom{=} \times \left(\int e^{\Delta^d_{k+1}/h} \prod_{i=k+1}^{N_T} \bPP_{i, i+1}^h dx_{[k+2, N_T]}\right)                                 \\
                                         & = \exp(\psi^u_k(x_k) + \overline{\Delta}_{k, k+1}(x_k, x_{k+1})/h + \psi^d_{k+1}(x_{k+1})) \bPP_{k, k+1}^h(x_k, x_{k+1})
\end{align*}

Similarly as in the previous proof, we can compute the marginal as:
\begin{align*}
    \PP^{h,\star}_k(x_k) & = \int \PP^{h,\star}(dx_{-k}, x_k)                                                                                                                                      \\
                         & = \int e^{(\Delta^u_k + \phi_{m_k}(x_k) + \vv{\Lambda_k} \cdot \vv{G_k}(x_k)  + \Delta^d_{k})/h} \rho_0 \prod_{i=0}^{N_T} \bPP_{i, i+1}^h dx_{[0, k-1]} dx_{[k+2, N_T]} \\
                         & = \left(\int e^{\Delta^u_k/h} \rho_0 \prod_{i=0}^{k-1} \bPP_{i, i+1}^h dx_{[0, k-1]}\right)                                                                             \\
                         & \hphantom{=} \times e^{\phi_{m_k}(x_k) + \vv{\Lambda_k} \cdot \vv{G_k}(x_k)/h}                                                                                          \\
                         & \hphantom{=} \times \left(\int e^{\Delta^d_{k}/h} \prod_{i=k}^{N_T} \bPP_{i, i+1}^h dx_{[k+1, N_T]}\right)                                                              \\
                         & = \exp(\psi^u_k(x_k) + \phi_{m_k}(x_k) + \vv{\Lambda_k} \cdot \vv{G_k}(x_k) + \psi^d_k(x_k))
\end{align*}

\end{document}